\DeclareMathAlphabet{\mathbbm}{U}{bbm}{m}{n}
\definecolor{CadetBlue}{cmyk}{0.62, 0.57, 0.23, 0 }
\definecolor{black}{cmyk}{1, 0.5, 0, 0 }
\definecolor{RedViolet}{cmyk}{0.07, 0.9, 0, 0.34 }
\definecolor{SeaGreen}{cmyk}{0.69, 0, 0.5, 0}
\DeclareMathAlphabet{\mathpzc}{OT1}{pzc}{m}{it}
\newcommand{\C}{\mathbb C}
\newcommand{\D}{\mathbb D}
\newcommand{\F}{\mathbb F}
\newcommand{\N}{\mathbb N}
\newcommand{\PR}{\mathbb P}
\newcommand{\Q}{\mathbb Q}
\newcommand{\Z}{\mathbb Z}
\newcommand{\I}{\mathbb I}
\newcommand{\gota}{\mathfrak a}
\newcommand{\gotb}{\mathfrak b}
\newcommand{\gotm}{\mathfrak m}
\newcommand{\gotn}{\mathfrak n}
\newcommand{\gotp}{\mathfrak p}
\newcommand{\gotq}{\mathfrak q}
\newcommand{\gotP}{\mathfrak P}
\newcommand{\interior}[1]{%
{\kern0pt#1}^{\mathrm{o}}%
}
\DeclareMathOperator{\ord}{ord}
\DeclareMathOperator{\Gal}{Gal}
\newtheorem{theo}{Theorem}
\newtheorem{lemm}{Lemma}
\newtheorem{prop}{Proposition}
\newtheorem{coro}{Corollary}
\newtheorem*{nntheo}{Theorem}
\newtheorem*{MainThm}{Main Theorem}
\theoremstyle{definition}
\theoremstyle{remark}
\newtheorem{note}{Note}
\newtheorem*{clai}{Claim}
\title[Modular Invariant of Rank 1 Drinfeld Modules]{Modular Invariant of Rank 1 Drinfeld Modules and Class Field Generation}
\author{L. Demangos}
\address{Xi'an Jiaotong - Liverpool University, Department of Mathematical Sciences, Mathematics Building Block B, 111 Ren'ai Road, Suzhou Dushu Lake Science
and Education Innovation District, Suzhou Industrial Park, Suzhou, Peoples Republic of China, 215123}
\email{Luca.Demangos@xjtlu.edu.cn}
\author{T.M. Gendron}
\address{Instituto de Matem\'{a}ticas -- Unidad Cuernavaca, Universidad
Nacional Aut\'{o}noma de M\'{e}xico, Av. Universidad S/N, C.P. 62210
Cuernavaca, Morelos, M\'{e}xico}
\email{tim@matcuer.unam.mx}
\subjclass[2010]{Primary 11R58, 11R37, 11F52}
\keywords{modular invariant, global function fields, explicit class field theory}
\date{\today}
\begin{document}

\maketitle
\date{\today}

\begin{abstract}  The modular invariant of rank 1 Drinfeld modules is introduced and used to formulate and prove an exact analog of the Weber-Fueter theorem for
global function fields. The main ingredient in the proof is a version of Shimura's Main Theorem of Complex Multiplication for global function fields, which is also proved here.
\end{abstract}
\tableofcontents

\section{Introduction}

Let $K/\F_{q}(T)$ be a finite extension associated to a morphism of curves $\Upsigma\rightarrow \PR^{1}$ defined over $\F_{q}$.
Choose a point in $\Upsigma$ lying over $\infty\in \PR^{1}$, also denoted ``$\infty$'', and let $A\subset K$ be the Dedekind ring of functions regular outside of $\infty$.  Denote by $K_{\infty}$ the completion
of $K$ at $\infty$ and by $\C_{\infty}$ the completion of an algebraic closure $\overline{K_{\infty}}$.  

Using as a template the modular invariant
of rank 2 Drinfeld modules defined by Gekeler \cite{Gekeler}, a modular invariant for rank 1 Drinfeld modules may be defined: which may be viewed, equivalently, as a function of the ideal class group
\[ j:{\sf Cl}(A)\longrightarrow K_{\infty} .\]
If $H_{A}$ is the Hilbert class field associated to $A$ (the maximal abelian unramified extension of $K$ totally split at $\infty$), in \S \ref{CFGen} the following theorem is proved.
\begin{nntheo} For any ideal class $\mathfrak{a}\in {\sf Cl}(A)$,
\[   H_{A} = K(j(\mathfrak{a})).\]
\end{nntheo}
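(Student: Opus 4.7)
The plan is to follow the architecture of the classical Weber--Fueter theorem for imaginary quadratic fields, transported to the rank 1 Drinfeld setting. Three ingredients are required: the containment $K(j(\mathfrak{a}))\subseteq H_{A}$; an explicit description of the action of $\Gal(H_{A}/K)$ on $j(\mathfrak{a})$; and the injectivity of $j$ on ${\sf Cl}(A)$. Once these three are in hand, the theorem follows by a simple counting argument.

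First I would establish $j(\mathfrak{a})\in H_{A}$. Each class $\mathfrak{a}\in {\sf Cl}(A)$ uniformizes a rank 1 Drinfeld $A$-module over $\C_{\infty}$ whose isomorphism class is captured by $j(\mathfrak{a})$. By the Drinfeld--Hayes theory of sign-normalized Drinfeld modules, every such isomorphism class admits a representative whose coefficients lie in $H_{A}$. Expressing $j$ as a $K$-rational function of these coefficients (mirroring Gekeler's recipe in the rank 2 case) then places $j(\mathfrak{a})$ in $H_{A}$.

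Second, I would invoke the function field analog of Shimura's Main Theorem of Complex Multiplication, proved elsewhere in this paper, to compute the Galois action. Under the Artin reciprocity isomorphism ${\sf Cl}(A)\cong\Gal(H_{A}/K)$, the ideal class $\mathfrak{b}$ corresponds to an automorphism $\sigma_{\mathfrak{b}}$ satisfying
\[ j(\mathfrak{a})^{\sigma_{\mathfrak{b}}} \;=\; j(\mathfrak{b}^{-1}\mathfrak{a}), \]
so that the $\Gal(H_{A}/K)$-orbit of $j(\mathfrak{a})$ is exactly the full image $j({\sf Cl}(A))$.

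Finally, since $j$ is a genuine isomorphism invariant of rank 1 Drinfeld modules, distinct classes yield distinct $j$-values; this injectivity produces $|{\sf Cl}(A)|$ pairwise distinct Galois conjugates of $j(\mathfrak{a})$ over $K$, hence $[K(j(\mathfrak{a})):K]\geq |{\sf Cl}(A)| = [H_{A}:K]$, and combined with the first step forces equality. I expect the hardest step to be the second: the CM Main Theorem requires careful tracking of the Artin action on torsion modules of the associated Drinfeld modules, and demands that normalization choices — the sign function at $\infty$, the convention for the Artin symbol — be aligned so that the reciprocity formula above takes exactly the stated clean form. A secondary subtlety is the injectivity of $j$ itself: in the function field setting $j$ is $K_{\infty}$-valued rather than $\C$-valued, so classical analytic separation arguments must be replaced by ones intrinsic to the rigid-analytic uniformization of Drinfeld modules.
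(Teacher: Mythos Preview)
Your overall architecture matches the paper's, and the second step --- Galois equivariance via the Main Theorem --- is indeed the core, handled essentially as you describe. But your third step contains a genuine gap. You write that ``since $j$ is a genuine isomorphism invariant of rank 1 Drinfeld modules, distinct classes yield distinct $j$-values''; this conflates \emph{invariant} with \emph{complete invariant}. The paper's $j$ is defined by the explicit zeta-value formula (\ref{defnofj}), and while constancy on ideal classes is easy (Proposition~\ref{classinv}), separation of classes is not. The paper devotes the bulk of \S\ref{ModInvSec} to this: Lemmas~\ref{basislemma} and~\ref{omegalemma} and Corollary~\ref{boundonzetaa} establish precise absolute-value estimates for the partial sums $\Upomega_{i}^{\mathfrak{a}}(q^{n}-1)$, and Theorem~\ref{jadifffrom1} then shows by a direct ultrametric computation that $j(\mathfrak{a})\neq j((1))$ for every non-principal $\mathfrak{a}$. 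Only after combining this with the equivariance does full injectivity follow (Corollary~\ref{cor}). Your proposal offers no substitute for this analytic work; an appeal to ``rigid-analytic separation'' does not engage with the specific zeta-value definition of $j$ used here.

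A secondary difference concerns your first step. Your route to $j(\mathfrak{a})\in H_{A}$ --- expressing $j$ as a $K$-rational function of the coefficients of a model over $H_{A}$ --- is not the paper's, and is not obviously available for this $j$, which is built from infinite zeta sums rather than from finitely many Drinfeld-module coefficients. The paper instead first lands $j(\mathfrak{a})$ in the \emph{narrow} Hilbert class field $H_{A}^{1}$ via Goss's theorem on the normalized values $\widetilde{\upzeta}^{\mathfrak{a}}(n)=\upxi_{\mathfrak{a}}^{-n}\upzeta^{\mathfrak{a}}(n)\in H_{A}^{1}$ (proof of Theorem~\ref{t3}), and then uses the equivariance formula together with the fact that $j$ depends only on the ordinary class, not the narrow class, to show that $\Gal(H_{A}^{1}/H_{A})$ acts trivially, forcing descent to $H_{A}$ (Corollary~\ref{cor}).
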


Let $K^{\rm ab}_{A}$ be the maximal abelian extension of $K$ totally split at $\infty$.  Given $\mathfrak{m}\subset A$ a modulus, let $K^{\mathfrak{m}}_{A} = K^{\rm ab}_{A}\cap K^{\mathfrak{m}}$,
where $K^{\mathfrak{m}}$ is the usual ray class field defined by Class Field Theory.   
Let $\D$ be a rank 1 Drinfeld $A$-module, whose coefficients are assumed to belong to $H_{A}$ = the minimal field of definition of $\D$.  Let $e:\C_{\infty}/\Uplambda\rightarrow \C_{\infty}$ be the associated
exponential, where $\Uplambda = \upxi \mathfrak{a}$ for some ideal $\mathfrak{a}\subset A$ and $\upxi\in\C_{\infty}$. 
\S \ref{CFGen} also contains a proof of the following result.
  \begin{nntheo} $  K^{\mathfrak{m}}_{A} = H_{A} \big(e(\upxi m)|\; m\in \mathfrak{m}^{-1}\mathfrak{a}/\mathfrak{a}\big). $
\end{nntheo}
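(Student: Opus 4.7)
The plan is to show that $L := H_A\bigl(e(\upxi m) \mid m \in \mathfrak{m}^{-1}\mathfrak{a}/\mathfrak{a}\bigr)$ equals $K^{\mathfrak{m}}_A$. Under the uniformization $e$ the elements $e(\upxi m)$ are exactly the $\mathfrak{m}$-torsion points $\D[\mathfrak{m}]$ of the rank $1$ Drinfeld module $\D$, which is already defined over $H_A$ by the first theorem in the excerpt. Thus $L = H_A(\D[\mathfrak{m}])$, and one is reduced to the function-field analog of adjoining the $\mathfrak{m}$-division values to the Hilbert class field.

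I would first establish $L \subseteq K^{\mathfrak{m}}_A$. Because $\D[\mathfrak{m}]$ is a free $(A/\mathfrak{m})$-module of rank one, the natural Galois action on it yields an embedding $\rho\colon \Gal(L/H_A) \hookrightarrow (A/\mathfrak{m})^{*}$, so $L/H_A$ is finite abelian and $L/K$ is abelian. That the conductor of $L/K$ divides $\mathfrak{m}\cdot \infty$ is immediate from $L$ being generated by $\mathfrak{m}$-torsion of $\D$; that every place of $H_A$ over $\infty$ splits completely in $L$ follows by choosing $\D$ sign-normalized (possible since $\D$ is defined over $H_A$) and using the analytic description of $\D$ over $K_\infty$, which exhibits the division values as lying in a completely split extension at $\infty$. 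Together these put $L$ inside $K^{\rm ab}_A \cap K^{\mathfrak{m}} = K^{\mathfrak{m}}_A$.

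The reverse inclusion I would prove by invoking the Main Theorem of Complex Multiplication proved earlier in the paper. Its reciprocity law gives, for an idele $s$ of $K$ with Artin symbol $\sigma_s$, an explicit formula of the form $\sigma_s\bigl(e(\upxi m)\bigr) = e\bigl(\upxi\cdot s^{-1}m\bigr)$ under the appropriate normalization, where $s$ acts on $m\in\mathfrak{m}^{-1}\mathfrak{a}/\mathfrak{a}$ through its image in $(A/\mathfrak{m})^{*}$. Applied to ideles fixing $H_A$ (those trivial at $\infty$ whose finite part represents the principal class), this shows that the image of $\rho$ contains every element of $(A/\mathfrak{m})^{*}/\F_q^{*}$. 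Since class field theory identifies $\Gal(K^{\mathfrak{m}}_A/H_A)$ with $(A/\mathfrak{m})^{*}/\F_q^{*}$, matching orders forces $\rho$ to be an isomorphism onto this group, and hence $L = K^{\mathfrak{m}}_A$.

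The principal obstacle is this last step: extracting from the general Main Theorem a clean Galois action on the specific transcendentals $e(\upxi m)$, with all normalizations (the class of $\mathfrak{a}$, the period $\upxi$, and the chosen sign-normalized model of $\D$) aligned so that the reciprocity takes the multiplication-on-$m$ form above. A secondary technical point is verifying that precisely $\F_q^{*}$, and no larger subgroup of $(A/\mathfrak{m})^{*}$, acts trivially on $\D[\mathfrak{m}]$, so that $\rho$ lands in $(A/\mathfrak{m})^{*}/\F_q^{*}$ as both an injection and surjection; this amounts to a non-degeneracy check on the torsion representation that one reads off from the reciprocity once the normalizations are fixed.
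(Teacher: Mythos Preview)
Your outline has a genuine gap in the first inclusion. You write that splitting at $\infty$ ``follows by choosing $\D$ sign-normalized (possible since $\D$ is defined over $H_A$)''. This is false in general: a sign-normalized (Hayes) module has coefficients in the \emph{narrow} Hilbert class field $H_A^{1}$, and when $d_\infty>1$ one has $H_A^{1}\supsetneq H_A$. The paper stresses exactly this point just before stating the theorem: the model $\D$ over $H_A$ is \emph{not} sign-normalized unless $d_\infty=1$. If you pass to a Hayes model, the torsion values you adjoin generate the narrow ray class field $K^{\mathfrak{m},1}_A$ over $H_A^{1}$, not $K^{\mathfrak{m}}_A$ over $H_A$; the distinction between the two is precisely what the theorem is about. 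So your argument for complete splitting at $\infty$ collapses, and with it the containment $L\subset K^{\mathfrak{m}}_A$.

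The paper avoids this entirely by a different and more symmetric strategy: rather than proving two inclusions (one via conductor and splitting bounds, one via a surjectivity/counting argument on $(A/\mathfrak{m})^{\times}/\F_q^{\times}$), it shows directly that the two fields are fixed by the \emph{same} subgroup of $\Gal(K^{\rm ab}_A/K)$. Given $\upsigma=[s,K]$, the Main Theorem is used in both directions together with a short lemma stating that an id\`ele $s$ with $s\mathfrak{a}=\mathfrak{a}$ acts trivially on $\mathfrak{m}^{-1}\mathfrak{a}/\mathfrak{a}$ if and only if $s\in K^{\times}U^{\mathfrak{m}}_A$. This bypasses any explicit identification of $\Gal(K^{\mathfrak{m}}_A/H_A)$ with $(A/\mathfrak{m})^{\times}/\F_q^{\times}$ and the attendant non-degeneracy check you flag as a secondary obstacle; it also never needs to analyze the behavior at $\infty$ separately, since that is already encoded in the id\`elic description of $K^{\mathfrak{m}}_A$.
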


There are other versions of explicit class field theory for function fields, see for example \cite{Drinfeld}, \cite{Hayes2}, \cite{Hayes}, \cite{Gekeler}, \cite{AngPell}. 
The novelty in the approach presented here is two-fold: 1) in using the modular invariant, a close
analog of the classical Weber-Fueter Theorem \cite{Serre}, \cite{Si} is obtained and 2) the results give generation for the smaller non-narrow ray class fields.

The proofs do not use results from previous works on explicit class field generation. The main tool in proving the theorems in this paper is the function field version of 
Shimura's Main Theorem of Complex Multiplication \cite{Shi}: this does not seem to appear in the literature, so a proof of it is given in \S \ref{MT}.   

As a consequence of the Main Theorem, the modular invariant satisfies the equivariance
\begin{align}\label{equi}   j(\mathfrak{b}^{-1}\mathfrak{a}) = j(\mathfrak{a})^{\upsigma_{\mathfrak{b}}}  \end{align}
for any $\mathfrak{a},\mathfrak{b}\in {\sf Cl}(A)$, where $\upsigma_{\mathfrak{b}} $ is the automorphism corresponding to $\mathfrak{b}$ by Class Field Theory.  See Theorem \ref{t3} of \S \ref{CFGen}.  The latter property, together with
Theorem \ref{jadifffrom1} of \S \ref{ModInvSec},  allows
one to conclude that $j$ is injective (i.e.\ a complete invariant), see Corollary \ref{cor}.  Once it is shown that $j(\mathfrak{a})\in H_{A}$, injectivity along with (\ref{equi}) show its orbit by ${\rm Gal}(H_{A}/K)$ is of
maximal size, and thus it is a primitive generator of $H_{A}$.  The proof
of the explicit expression for  $  K^{\mathfrak{m}}_{A} $ is a straight-forward application of the Main Theorem.

\section{Basic Notation}
We refer to \cite{Goss}, \cite{Hayes},  \cite{Th} for basic ideas and results of function field arithmetic. We begin by fixing some notation once and for all.
Let $\mathbb{F}_{q}$ is the finite field with $q$ elements, $q$ a power of a prime $p$ and fix a finite extension 
\[ K/\F_{q}(T)\] 
which we assume is induced by a morphism of curves $ \Upsigma_{K}\rightarrow \PR^{1} $ defined over $\F_{q}$.
Fix a closed point $\infty \in \Upsigma_{K}$ with associated valuation $v=v_{\infty}$ and define the Dedekind domain
\[ A:=\{f\in K \text{ regular outside of $\infty$}\}. \] 
Denote by
$K_{\infty}$ the completion of $K$ with respect to $v$ and by $\F_{\infty}$ its field of constants, an extension of $\F_{q}$ of degree $d_{\infty}$.  The completion
 of an algebraic closure of $\overline{K_{\infty}}$ is denoted $\C_{\infty}$.  For $x\in \C_{\infty}$, we define the degree by $\deg (x):= -d_{\infty}v(x)$ and the absolute value
$|\cdot |= q^{\deg (\cdot )}=q^{-d_{\infty} v(\cdot )}$.

\section{Class Field Theory}\label{CFT}

Let $\I_{K}$ be the $K$-id\`{e}les
 and let  \[ K^{\rm ab}_{A}\subset K^{\rm ab}\] be the maximal abelian extension of $K$ totally split over the point  $\infty$. 
 Likewise, the Hilbert class field 
 \[ H_{A}\subset K^{\rm ab}_{A}\]
 is the maximal abelian unramified extension of $K$ totally split over $\infty$.  See \cite{Ro}.
 
  For $L/K$ with $L\subset K^{\rm ab}_{A}$, let $\mathfrak{m}\subset A$ be the product of primes that ramify in $L$ and denote by ${\sf I}(\mathfrak{m})$ the group of ideals
  relatively prime to $\mathfrak{m}$.  For each prime $\mathfrak{p}\in {\sf I}(\mathfrak{m})$, 
the Artin symbol is defined
\[  (\mathfrak{p}, L/K) :=\upsigma_{\mathfrak{p}}\in {\rm Gal}(L/K)  \]  
where $\upsigma_{\mathfrak{p}}$ is the Frobenius associated to $\mathfrak{p}$, and then one extends multiplicatively to obtain the Artin map
\[  (\cdot, L/K) :{\sf I}(\mathfrak{m})\longrightarrow   {\rm Gal}(L/K) .\]

In the id\`{e}lic language, the usual reciprocity epimorphism (see \cite{Ta}, Theorem 5.1)
\[ [\cdot ,K]: \I_{K} \longrightarrow  {\rm Gal}(K^{\rm ab}/K)    \]
 induces an $A$-reciprocity map, which appears as the lower horizontal arrow of the following commutative diagram
  \begin{diagram}
 \I_{K} & \rTo^{[\cdot, K]} &  {\rm Gal}(K^{\rm ab}/K) \\
||& &\dTo \\
  \I_{K} & \rTo^{[\cdot, K]} &  {\rm Gal}(K^{\rm ab}_{A}/K) ,\\
 \end{diagram}
the vertical arrow on the right being the canonical projection. Thus the $A$-reciprocity map is an epimorphism with kernel containing $K^{\times}$.
If we denote by $(s)\subset A$ the ideal associated to $s\in\I_{K}$ then 
\[   [s, K] |_{L} =  \left((s), L/K\right)  .\]
 If $\mathfrak{p}\subset A$ is a prime unramified in $L$, $\uppi_{\mathfrak{p}}$ is a uniformizer of $K_{\mathfrak{p}}^{\times}$, identified with the id\`{e}le having
 $1$'s at all other places,
 then
 \begin{align}\label{FrobNote}  [\uppi_{\mathfrak{p}}, K]|_{L} = \upsigma_{\mathfrak{p}} .\end{align}

Let $\mathfrak{m}\subset A$ be a modulus (an ideal) and define 
\[ K^{\mathfrak{m}}_{A}  :=K^{\mathfrak{m}}\cap L^{\rm ab}_{A}   \] to be the subfield of the usual ray class field $K^{\mathfrak{m}}$ completely split at $\infty$.  Let $U^{\mathfrak{m}}_{A}$
be the subgroup of $\I_{K}$ defined
\[ U^{\mathfrak{m}}_{A} = \{ s\in \I_{K}|\; \text{for all }\mathfrak{p}\subset A, \;s_{\mathfrak{p}}\in A_{\mathfrak{p}}^{\times} \text{ and satisfies } s_{\mathfrak{p}} \equiv 1\mod \mathfrak{m}A_{\mathfrak{p}}^{\times} \}. \]
 The reciprocity map induces an isomorphism
\begin{align}\label{RayClassChar}   [\cdot, K]: \I_{K}/K^{\times} U^{\mathfrak{m}}_{A} \longrightarrow {\rm Gal}( K^{\mathfrak{m}}_{A}/K). \end{align}
 See \S1 of \cite{Auer} as well as \cite{Hayes2}.

\section{Drinfeld-Hayes Theory}

The Frobenius automorphism acting on $\C_{\infty}$ is denoted $\uptau (x)=x^{q}$, and for any subfield $L\subset \C_{\infty}$, 
\[ L\{ \uptau \}\] is the noncommutative algebra of additive polynomials in $\uptau$ with product given by composition.
Let \[ \upiota: A\rightarrow L\]be a ring homomorphism, making $L$ an $A$-field: the characteristic is defined to be $\wp = {\rm Ker}(\upiota )$.  A rank 1 Drinfeld $A$-module defined over $L$
\[   \D = (\C_{\infty}, \uprho ), \]
consists of an $\F_{q}$-algebra homomorphism
\[ \uprho : A\longrightarrow L\{ \uptau\} \subset \C_{\infty}\{ \uptau\} \]
in which each $\uprho_{a}:= \uprho (a)$ has the form
 \[ \uprho_{a}(\uptau ) = \upiota (a)\uptau^{0} + a_{1}\uptau +\dots +a_{d}\uptau^{d},\quad d=\deg (a),\quad a_{1},\dots ,a_{d}\in L,\;\; a_{d}\not=0 .\]
Note that $a_{0}= \upiota (a)$ implies that
\[ \upiota = D\circ \uprho,\]
where $D=d/dx$ is the derivative with respect to $x$.  

For characteristic $0$ we have the following analytical version of rank 1 Drinfeld $A$-modules. By a rank 1 $A$-lattice is meant a discrete rank 1 $A$-submodule $\Uplambda$ of $\C_{\infty}$.  By Dirichlet's Unit Theorem (see \cite{Cohn}, Theorem 3.3), $A^{\times}\cong \mathbb{F}_{q}^{\times}$, which implies that
$A$ and all of its fractional ideals are rank 1 $A$-lattices in $\C_{\infty}$, and any rank 1 $A$-lattice $\Uplambda$ is of the form $\upxi\mathfrak{a}$ for $\upxi\in \C_{\infty}$ and $\mathfrak{a}$ a fractional $A$-ideal.   To  
an $A$-lattice $\Uplambda$ we may associate the exponential function
\[ e_{\Uplambda}:\C_{\infty}\longrightarrow \C_{\infty},\quad e_{\Uplambda}(z) = z\prod_{0\not= \uplambda\in\Uplambda}\left(1- \frac{z}{\uplambda}\right). \]
The exponential function in turn defines a unique Drinfeld $A$-module $\D=(\C_{\infty},\uprho)$, isomorphic as an $A$-module to $\C_{\infty}/\Uplambda$ via $e_{\Uplambda}$ i.e.\
\begin{align}\label{fundform}  e_{\Uplambda}(az) = \uprho_{a} (e_{\Uplambda}(z)),\quad \text{for all }a\in A.\end{align}
Every rank 1 Drinfeld $A$-module $\D=(\C_{\infty},\uprho )$ in characteristic 0 may be obtained in this way (see \cite{Th}, Theorem 2.4.2): we denote by $\Uplambda_{\uprho}$ and $e_{\uprho}$ the corresponding
lattice and exponential map.  It follows then that a characteristic $0$ Drinfeld $A$-module satisfies ${\rm End}(\D)\cong A$ (see \cite{Goss}, Theorem 4.7.8).

We fix a sign function: a homomorphism
\[ {\rm sgn}:K_{\infty}^{\times}\longrightarrow \F_{\infty}^{\times}\]
which is the identity on $\F_{\infty}^{\times}$.  There are exactly \[ \# \F_{\infty}^{\times}=q^{d_{\infty}}-1\] sign functions.  For any $\upsigma\in {\rm Gal}(\F_{\infty}/\F_{q})$,
a twisted sign function is a homomorphism of the form $\upsigma\circ {\rm sgn}$. 
For $\D =(\C_{\infty},\uprho )$ a rank 1 Drinfeld module, the top coefficient \[ \upmu_{\uprho}(a):= a_{d} \]
is non zero; the map $a\mapsto  \upmu_{\uprho}(a)$ may be extended to $K_{\infty}^{\times}$, and we say that $\D$ is sign normalized or a Hayes module if $\upmu_{\uprho}$ is a twisting
of the sign function.   See \cite{Hayes}.

 If $h_{A}$ is the class number of $A$ then there are exactly $h_{A}$ isomorphism classes of rank 1 Drinfeld modules over $\C_{\infty}$, and each class contains
exactly $(q^{d_{\infty}}-1)/(q-1)$ Hayes modules.  Each Hayes module $\D=(\C_{\infty},\uprho )$ is thus associated to a class of ideal $\mathfrak{a}$ which we assume integral.  Then, there is a transcendental element
$\upxi_{\uprho}\in \C_{\infty}$ so that the Drinfeld module of the lattice $\Uplambda_{\uprho}:=\upxi_{\uprho} \mathfrak{a}$ is $\D$.

Recall from \S \ref{CFT} the Hilbert class field $H_{A}$ associated to $A$. To state the results of Hayes Theory, we also require the narrow version, which is defined as follows.
The narrow class group of $A$ is the quotient
\[ {\sf Cl}_{1}(A) :={\sf I}(A)/ {\sf P}_{1}(A)\] 
where ${\sf I}(A)$ is the group of fractional ideals of $A$ and  ${\sf P}_{1}(A)$ is the subgroup of principal ideals generated by a ${\rm sgn}$ one element.  Then 
\[ h_{A}^{1}:=\# {\sf Cl}_{1}(A) =h_{A} (q^{d_{\infty}}-1)/(q-1) \]
where $h_{A}$ is the class number of $A$.  In the id\`{e}lic language (see \cite{Th}, page 80), if we let
\[ U_{A}^{1} :=\{ s\in \I_{K}|\; \text{for all }\mathfrak{p}\subset A, \;s_{\mathfrak{p}}\in A_{\mathfrak{p}}^{\times} \text{ and } {\rm sgn}(s_{\infty})=1\} ,\]
and if $\uppi_{\infty}$ is a uniformizer of $K_{\infty}$ with ${\rm sgn}(\uppi_{\infty})=1$, then the narrow Hilbert class field 
\[ H^{1}_{A}\]
is defined to be the class field corresponding to the group \[ K^{\times}\cdot \uppi_{\infty}^{\Z} \cdot U_{A}^{1}\subset\I_{K}.\]  Thus, 
 Artin reciprocity gives an isomorphism
\[  [\cdot, K]: \I_{K}/ \left( K^{\times}\cdot \uppi_{\infty}^{\Z} \cdot U_{A}^{1}\right)\longrightarrow {\rm Gal}(H_{A}^{1}/K); \]
which induces, on the level of ideal classes, an isomorphism
\[ {\sf Cl}_{1}(A) \longrightarrow {\rm Gal}(H_{A}^{1}/K) ,\quad \mathfrak{a}\longmapsto \upsigma_{\mathfrak{a}}.\]
When $d_{\infty}=1$ i.e. $\F_{\infty}=\F_{q}$
then $H_{A}^{1}=H_{A}$.

The Hilbert class fields have the following relation to rank 1 Drinfeld modules: 
\begin{itemize}
\item[i.] $H_{A}$ is the minimal field of definition
of any rank 1 Drinfeld $A$-module.   
\item[ii.] $H_{A}^{1}$ is the field generated over $K$ by the coefficients
of $\uprho_{a}$, for any Hayes module $\D=(\C_{\infty},\uprho )$ and any fixed $a\in A$.  
\end{itemize}
Thus the coefficients of any Hayes module belong to $H_{A}^{1}$.  It follows from (\ref{fundform})  that for a Hayes module, the coefficients of the exponential $e_{\uprho}$ are also in $H_{A}^{1}$.

If $\mathfrak{a}$ is an ideal and $\D=(\C_{\infty},\uprho )$ is a Hayes module, the set $\{ \uprho_{\upalpha}|\; \upalpha\in\mathfrak{a}\}$ is a principal left ideal of $H_{A}^{1}\{ \uptau\}$ and has a unique generator $\uprho_{\mathfrak{a}}$ of ${\rm sgn}$ 1.
Then there exists a unique Hayes module denoted \[ \mathfrak{a}\ast\D=(\C_{\infty},\mathfrak{a}\ast \uprho)\] for which
\[  \uprho_{\mathfrak{a}} \circ \uprho_{a} =(\mathfrak{a}\ast \uprho)_{a} \circ \uprho_{\mathfrak{a}} .  \]
That is, $\uprho_{\mathfrak{a}}$ defines an isogeny  \[ \uprho_{\mathfrak{a}}:\D\longrightarrow \mathfrak{a}\ast\D.\] 
The set of Hayes $A$-modules is a principal homogeneous space for the $\ast$ action of ${\sf Cl}_{1}(A)$,
and we have
\begin{align}\label{HayesThm}  \mathfrak{a}\ast \D = \D^{\upsigma_{\mathfrak{a}}}  \end{align}
where $\upsigma_{\mathfrak{a}}$ is the automorphism associated to the narrow class of $\mathfrak{a}$ by Class Field Theory, and where for any automorphism $\upsigma$ of $\C_{\infty}$, $\D^{\upsigma}=(\C_{\infty}, \uprho^{\upsigma})$, with $ \uprho^{\upsigma}= \upsigma\circ \uprho$.

We will need the following results concerning reduction.   Let $\gotp$ be a prime ideal of $A$. 
Let $L$ be a finite Galois extension of $K$, $\mathcal{O}_{L}\supset A$ the integral closure of $A$ in $L$ and
$\gotP\subset \mathcal{O}_{L}$ a prime ideal of $L$ which divides $\gotp$. 
For $\D = (\C_{\infty},\uprho )$ a Drinfeld module defined over $L$, we recall that $\D$ has good reduction with respect to $\mathfrak{P}$ if $\D$ is, up to isomorphism, a Drinfeld module 
with coefficients in the local ring $\mathcal{O}_{\mathfrak{P}}\subset L$ and the reduction mod $\mathfrak{P}$ is also a rank 1 Drinfeld module.  In this case, we denote by $\widetilde{\mathbb{D}}=(\C_{\infty},\widetilde{\uprho})$
the reduction of $\D$ mod $\mathfrak{P}$.

\begin{lemm}\label{1}
Let $\mathbb{D}_{1}=(\C_{\infty},\uprho_{1})$, $\mathbb{D}_{2}=(\C_{\infty},\uprho_{2})$ be two rank $\text{\rm 1}$ Drinfeld modules over $L$, both with good reduction at $\gotP$.  Then the natural reduction map:
\[ {\sf Hom}(\D_{1},\D_{2})\longrightarrow {\sf Hom}(\widetilde{\D}_{1},\widetilde{\D}_{2})\]\[\upphi \mapsto \widetilde{\upphi}\]is injective and $\deg(\upphi)=\deg(\widetilde{\upphi})$.
\end{lemm}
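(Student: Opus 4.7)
The plan is to reduce everything to a leading-coefficient computation in $\mathcal{O}_{\gotP}\{\uptau\}$, exploiting the defining property of good reduction together with the functional equation $\upphi\circ\uprho_{1,a} = \uprho_{2,a}\circ\upphi$ satisfied by any $\upphi\in {\sf Hom}(\D_{1},\D_{2})$.

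First I would set up integrality: since both $\D_{1}$ and $\D_{2}$ have good reduction at $\gotP$, after passing to isomorphic models I may assume their coefficients lie in $\mathcal{O}_{\gotP}$, and in particular that for every $a\in A$ with $d=\deg(a)>0$ the leading coefficients $\upalpha_{d}$ of $\uprho_{1,a}$ and $\upbeta_{d}$ of $\uprho_{2,a}$ are units of $\mathcal{O}_{\gotP}$ (this is exactly the condition that the reductions remain rank $1$). A standard argument shows that any $\upphi\in {\sf Hom}(\D_{1},\D_{2})$ automatically has coefficients in $\mathcal{O}_{\gotP}$: fixing an $a\in A$ of positive degree, the commutation $\upphi\circ\uprho_{1,a}=\uprho_{2,a}\circ\upphi$ provides an explicit recursion expressing each coefficient of $\upphi$ in terms of the lower ones, the (integral) coefficients of $\uprho_{1,a}$ and $\uprho_{2,a}$, and the unit $\upalpha_{d}$ (or $\upbeta_{d}$), so induction on the $\uptau$-degree keeps everything in $\mathcal{O}_{\gotP}$. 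This legitimizes the reduction map.

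The core step is then the leading-coefficient computation. Write $\upphi = c_{0}+c_{1}\uptau+\cdots+c_{n}\uptau^{n}$ with $c_{n}\neq 0$. Picking any $a\in A$ with $d=\deg(a)>0$, and comparing the top $\uptau^{n+d}$ coefficients on both sides of $\upphi\circ\uprho_{1,a}=\uprho_{2,a}\circ\upphi$, I obtain
\[ c_{n}\,\upalpha_{d}^{q^{n}} \;=\; \upbeta_{d}\,c_{n}^{q^{d}}, \]
so $c_{n}^{q^{d}-1} = \upalpha_{d}^{q^{n}}\upbeta_{d}^{-1}$ is a unit in $\mathcal{O}_{\gotP}$. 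Hence $c_{n}$ itself is a unit; in particular its reduction $\widetilde{c}_{n}$ is nonzero.

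From this the lemma falls out immediately. Since $\widetilde{c}_{n}\neq 0$, the $\uptau$-degree of $\widetilde{\upphi}$ equals that of $\upphi$, giving $\deg(\widetilde{\upphi})=\deg(\upphi)$. In particular $\widetilde{\upphi}\neq 0$ whenever $\upphi\neq 0$, which is the injectivity statement. The only delicate point is verifying that $\upphi$ itself admits an $\mathcal{O}_{\gotP}$-integral presentation compatible with the chosen integral models of $\D_{1}$ and $\D_{2}$; once that is in hand, the rest is a one-line leading-term manipulation exactly as in the classical elliptic curve case.
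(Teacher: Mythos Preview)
Your approach is correct and genuinely different from the paper's. The paper argues via the dual isogeny: given $\upphi\neq 0$ it invokes $\widehat{\upphi}$ with $\widehat{\upphi}\upphi=\uprho_{1,a}$, observes that good reduction forces $\widetilde{\uprho}_{1,a}\neq 0$ with $\deg(\widetilde{\uprho}_{1,a})=\deg(\uprho_{1,a})$, and concludes $\widetilde{\upphi}\neq 0$; the degree equality is then a squeeze (if $\deg(\widetilde{\upphi})<\deg(\upphi)$ one would need $\deg(\widetilde{\widehat{\upphi}})>\deg(\widehat{\upphi})$). Your route is more elementary and self-contained: the single identity $c_{n}^{\,q^{d}-1}=\upalpha_{d}^{\,q^{n}}\upbeta_{d}^{-1}$ already forces $v_{\gotP}(c_{n})=0$, so both injectivity and degree preservation follow at once without invoking the dual isogeny. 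The paper's argument is more conceptual; yours isolates exactly why the leading coefficient cannot drop.

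One small wrinkle: your integrality recursion should run \emph{top-down}, not bottom-up. The $\uptau^{0}$-comparison gives no information, and ascending would require dividing by non-units such as $a^{q}-a$. The clean order is: first extract $c_{n}\in\mathcal{O}_{\gotP}^{\times}$ from the $\uptau^{n+d}$-coefficient (this already holds in $L$, since $(q^{d}-1)\,v_{\gotP}(c_{n})=0$), and then descend: comparing $\uptau^{m+d}$-coefficients for $m=n-1,n-2,\dots$ yields
\[
\upbeta_{d}\,c_{m}^{\,q^{d}} - \upalpha_{d}^{\,q^{m}}\,c_{m} \;=\; (\text{integral expression in }c_{m+1},\dots,c_{n}),
\]
a monic integral equation for $c_{m}$ after dividing by the unit $\upbeta_{d}$. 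With this reordering the argument is complete.
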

\begin{proof}
By definition of good reduction $\uprho_{1}$ and $\uprho_{2}$ have, up to isomorphism, coefficients in $\mathcal{O}_{\mathfrak{P}}$  and for every $a\in A$ the additive polynomials $\widetilde{\uprho}_{1,a}(\uptau )$ and $\widetilde{\uprho}_{2,a}(\uptau )$ have degree as polynomials in $\uptau$ equal to $\deg(a)$.   If we denote 
\[ \uprho_{i,A}: =\{  \uprho_{i,a},\; a\in A\} \cong A,\quad i=1,2 ,\] then
the preceding remark implies in particular that the reduction ring homomorphism
\begin{align*}\uprho_{i,A} \longrightarrow \widetilde{\uprho}_{i,A} ,\quad \uprho_{i,a}\longmapsto \widetilde{\uprho}_{i,a}, \;\; i=1,2, \end{align*} 
has trivial kernel, hence is injective and thus $\widetilde{\uprho}_{i,a}\not=0$ for $a\not=0$.  Now choose $0\not=\upphi \in {\sf Hom}(\D_{1},\D_{2})$ an isogeny. 
By \cite{Goss}, Proposition 4.7.13, there exists a dual isogeny $\widehat{\upphi}\in {\sf Hom}(\D_{2},\D_{1})$ such that for some $a\in A$ 
we have that $\widehat{\upphi}\upphi=\uprho_{1,a}$.   Note that
since
\begin{align}\label{hattildecom} \widetilde{ \widehat{\upphi}\upphi} =\widetilde{ \widehat{\upphi}} \widetilde{\upphi}= \widetilde{ \uprho}_{1,a}, \end{align}
it follows that the reduction of $\upphi$ modulo $\gotP$ cannot be $0$, otherwise $\widetilde{\uprho}_{1,a}=0$, contradicting the hypothesis of good reduction. This proves the first part of the statement. 
Now observe that $\deg(\widetilde{\upphi})\leq \deg(\upphi)$. Since $\deg(\widetilde{\uprho}_{1,a})=\deg(\uprho_{1,a})=\deg a$, 
if $\deg(\widetilde{\upphi})$ is strictly less than $ \deg(\upphi)$, (\ref{hattildecom}) implies that $\deg(\widetilde{\widehat{\upphi}})> \deg(\widehat{\upphi})$ (in order to maintain  $\deg \widetilde{\uprho}_{1,a} =\deg\uprho_{1,a}$) which is clearly impossible. 
\end{proof}

 In what follows, we assume that $\D$ is a Hayes $A$-module defined over $L$ with good reduction at $\mathfrak{P}$.  
Let $\gotm\subset A$ be a fixed modulus (integral ideal) and recall that the $\gotm$-torsion module of $\D$ is defined
\[\D[\gotm]:=\{x\in \C_{\infty},\textsl{ }\uprho_{\gotm}(x)=0\} .\]  
If $\D[\gotm]\subset L$,  then since $\D[\mathfrak{m}]$ consists of the roots of the monic polynomial $\uprho_{\mathfrak{m}}$, $\D[\mathfrak{m}]\subset\mathcal{O}_{L}\subset\mathcal{O}_{\mathfrak{P}}$: it then
makes sense to reduce $ \D[\gotm]$ modulo $\mathfrak{P}$.  Denote by
  \[  \widetilde{\D}[\gotm]\]
  the $\gotm$-torsion points of the reduced Drinfeld module $\widetilde{\D}$.

\begin{lemm}\label{injtor}  Suppose that $\D [\mathfrak{m}]\subset L$ and $\mathfrak{P}\not|\mathfrak{m}$.  Then the reduction map \[\D[\gotm]\longrightarrow \widetilde{\D}[\gotm]\] is injective. 
\end{lemm}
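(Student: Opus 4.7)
The plan is to prove injectivity by a valuation argument at $\gotP$: once I know that the $\uptau^0$-coefficient of $\uprho_\gotm$ is a unit in $\mathcal{O}_\gotP$ (equivalently, $\widetilde{\uprho}_\gotm$ is a separable polynomial in $x$), the ultrametric inequality applied to $\uprho_\gotm(x)=0$ will rule out any nonzero $x\in\D[\gotm]$ with $\widetilde{x}=0$. The bulk of the work is establishing this separability.

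First, since $\uprho_\gotm$ is monic in $\uptau$ with roots $\D[\gotm]\subset\mathcal{O}_\gotP$, its coefficients, being elementary symmetric functions of the roots, all lie in $\mathcal{O}_\gotP$; hence the reduction $\widetilde{\uprho}_\gotm\in k\{\uptau\}$, with $k=\mathcal{O}_\gotP/\gotP$, is well defined and monic in $\uptau$ of the same $x$-degree $|A/\gotm|=q^N$. To identify $\widetilde{\uprho}_\gotm$ with the $\gotm$-torsion polynomial of $\widetilde{\D}$, I would right-divide $\uprho_a$ by the monic $\uprho_\gotm$ in $\mathcal{O}_\gotP\{\uptau\}$ for each $a\in\gotm$: the remainder must vanish because $\uprho_a$ belongs to the left ideal generated by $\uprho_\gotm$ in $L\{\uptau\}$, and hence $\uprho_a=g_a\uprho_\gotm$ with $g_a\in\mathcal{O}_\gotP\{\uptau\}$. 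Reducing gives $\widetilde{\uprho}_a=\widetilde{g}_a\widetilde{\uprho}_\gotm$, so the monic generator $P$ of the left ideal $\sum_{a\in\gotm}k\{\uptau\}\widetilde{\uprho}_a$ right-divides $\widetilde{\uprho}_\gotm$; comparing $x$-degrees forces $P=\widetilde{\uprho}_\gotm$. Since $\widetilde{\D}$ has characteristic $\gotp:=\gotP\cap A$ coprime to $\gotm$, the standard structure theory for rank $1$ Drinfeld modules away from the characteristic yields $|\widetilde{\D}[\gotm]|=|A/\gotm|$, matching the $x$-degree of $\widetilde{\uprho}_\gotm$. Thus $\widetilde{\uprho}_\gotm$ has $|A/\gotm|$ distinct roots, is separable, and its $\uptau^0$-coefficient $\widetilde{c}_0$ is nonzero.

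Finally, assume $0\neq x\in\D[\gotm]$ with $\widetilde{x}=0$; then $v_\gotP(x)>0$. Writing $\uprho_\gotm(\uptau)=\sum_{i=0}^N c_i\uptau^i$ with $c_0\in\mathcal{O}_\gotP^\times$, we have
\[
0=\uprho_\gotm(x)=c_0 x+c_1 x^q+\cdots+c_N x^{q^N},
\]
where $v_\gotP(c_0 x)=v_\gotP(x)$ while $v_\gotP(c_i x^{q^i})\geq q^i v_\gotP(x)>v_\gotP(x)$ for $i\geq 1$. The unique minimum forces $v_\gotP(\uprho_\gotm(x))=v_\gotP(x)<\infty$, contradicting $\uprho_\gotm(x)=0$. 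Hence $x=0$.

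The main obstacle will be the identification $\widetilde{\uprho}_\gotm=P$, which relies on carrying out noncommutative right-division integrally in $\mathcal{O}_\gotP\{\uptau\}$ together with the torsion-count $|\widetilde{\D}[\gotm]|=|A/\gotm|$; once those are secured, the valuation step is routine.
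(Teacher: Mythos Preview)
Your argument is correct and reaches the goal, but it takes a genuinely different route from the paper. The paper never touches $\uprho_{\mathfrak{m}}$ directly: it first treats the principal case $\mathfrak{m}=(m)$ with $m\notin\mathfrak{p}$, where good reduction forces $\widetilde{\uprho}_m$ to be separable, and then invokes Hensel's Lemma for injectivity on $\D[m]$. For general $\mathfrak{m}$ it picks two generators $m_1,m_2\in\mathfrak{m}\setminus\mathfrak{p}$ (possible since $\mathfrak{p}\nmid\mathfrak{m}$), checks $\D[\mathfrak{m}]=\D[m_1]\cap\D[m_2]$, and uses that reduction commutes with intersection. This is lighter: it needs no integral right-division in $\mathcal{O}_{\mathfrak{P}}\{\uptau\}$, no identification of $\widetilde{\uprho}_{\mathfrak{m}}$ with $(\widetilde{\uprho})_{\mathfrak{m}}$, and no torsion count for $\widetilde{\D}[\mathfrak{m}]$.

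Your approach, by contrast, works with $\uprho_{\mathfrak{m}}$ itself and establishes separability of its reduction via that identification. One small slip: from $\widetilde{\uprho}_a=\widetilde{g}_a\widetilde{\uprho}_{\mathfrak{m}}$ you get that $\widetilde{\uprho}_{\mathfrak{m}}$ right-divides $P$, not the reverse; but since both are monic of $\uptau$-degree $\deg\mathfrak{m}$ (the former because $\uprho_{\mathfrak{m}}$ is monic, the latter because $\widetilde{\D}$ has rank~$1$), the equality $P=\widetilde{\uprho}_{\mathfrak{m}}$ still follows. What your approach buys is a cleaner endgame: the ultrametric valuation step is completely explicit, whereas the paper's appeal to ``Hensel's Lemma'' for injectivity is really the same computation left implicit.
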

\begin{proof}   We note that the Lemma is true for $\mathfrak{m}=(m)$ principal: indeed, the polynomial $\uprho_{m}$ defining the $m$-torsion points
is $\mathfrak{P}$ primitive (i.e.\ $\not\equiv 0$ mod $\mathfrak{P}$) and since we have good reduction at $\mathfrak{P}$, $\widetilde{\D}$ is a rank 1 Drinfeld module hence $\widetilde{\uprho}_{m}$ reduces to a separable polynomial.  Then, we may apply Hensel's Lemma to conclude that the reduction map on $m$-torsion is injective (in fact bijective).   For general $\mathfrak{m}$, we first remark that we may choose generators $m_{1},m_{2}$ of $\mathfrak{m}$ both of which are not in $\mathfrak{P}$.  Indeed, by hypothesis, at least say $m_{1}\not\in \mathfrak{P}$: if $m_{2}\in \mathfrak{P}$ then we replace $m_{2}$ by $m_{1}+m_{2}\notin \gotP$.  We claim that $\D[\gotm]=\D[m_{1}]\cap \D[m_{2}]$. Indeed, the inclusion $\D[\gotm]\subset \D[m_{1}]\cap \D[m_{2}]$ follows from
\[\D[\gotm]=\bigcap_{m\in \gotm}\D[m]. \] On the other hand, for $m=am_{1}+bm_{2}$, $\uprho_{m}=\uprho_{a}\uprho_{m_{1}}+\uprho_{b}\uprho_{m_{2}}$ and the vanishing of both $\uprho_{m_{1}}$ and $\uprho_{m_{2}}$ implies that of $\uprho_{m}$, which proves the claim. Since the reduction map commutes with the intersection the proof is completed. 
\end{proof}


\begin{note}\label{RedFrob} Let $\D$ be a Hayes module, $\mathfrak{p}\subset A$ a prime ideal and consider the isogeny
\[ \uprho_{\mathfrak{p}}: \D\longrightarrow \mathfrak{p}\ast \D\]
between Hayes modules.  Assume that there exists a prime $\mathfrak{P}$ in $L$ above $\mathfrak{p}$ with respect to which $\D$ has good reduction.  
Then $\mathfrak{p}\ast \D$ also has good reduction at $\mathfrak{P}$ and the associated reduction map 
\[ \widetilde{\uprho}_{\mathfrak{p}}: \widetilde{\D}\longrightarrow \widetilde{\mathfrak{p}\ast \D} \]
is equal to $\uptau^{\deg(\mathfrak{p})}$.  See for example Theorem 3.3.4 on page 71 of \cite{Th} and its proof.
\end{note}


\section{Main Theorem of Complex Multiplication for Function Fields}\label{MT}

In this section we state and prove a version of Shimura's Main Theorem of Complex Multiplication \cite{Shi} in the setting of rank 1 Drinfeld modules.  


Let $\gotp\subset A$ be a prime ideal and let $K_{\gotp}$ be the completion of $K$ at $\gotp$, with $A_{\gotp}$ the corresponding completion of $A$. Similarly, if $\gota$ is a fractional ideal of $A$ let $\gota_{\gotp}:=\gota A_{\gotp}$. 

\begin{lemm}\label{Lemma 8.1}
Let $\gota$ be a fractional ideal of $A$. Then there is an isomorphism of $A$-modules:\[K/\gota\simeq \bigoplus_{\gotp}K_{\gotp}/\gota_{\gotp}.\]
\end{lemm}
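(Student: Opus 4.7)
The plan is to produce the isomorphism directly from the natural localization maps, then verify that the resulting map descends to a bijection on $K/\mathfrak{a}$.

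For each prime $\mathfrak{p}\subset A$, the composition $K\hookrightarrow K_{\mathfrak{p}}\twoheadrightarrow K_{\mathfrak{p}}/\mathfrak{a}_{\mathfrak{p}}$ is $A$-linear; assembling these over $\mathfrak{p}$ gives $\varphi: K\to \prod_{\mathfrak{p}}K_{\mathfrak{p}}/\mathfrak{a}_{\mathfrak{p}}$. I would first check that $\varphi$ lands in the direct sum: any $x\in K$ satisfies $v_{\mathfrak{p}}(x)\ge 0$ at all but finitely many primes of $A$ (it is regular at all but finitely many closed points of $\Upsigma_{K}\setminus\{\infty\}$), and $v_{\mathfrak{p}}(\mathfrak{a})=0$ for all but finitely many $\mathfrak{p}$; hence $x\in\mathfrak{a}_{\mathfrak{p}}$ for cofinitely many $\mathfrak{p}$. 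The kernel of $\varphi$ is
\[ \bigcap_{\mathfrak{p}}\bigl\{x\in K : v_{\mathfrak{p}}(x)\ge v_{\mathfrak{p}}(\mathfrak{a})\bigr\} = \mathfrak{a}, \]
the last equality being the standard fact that a fractional ideal in a Dedekind domain is determined by its local valuations. Thus $\varphi$ descends to an injection $\bar{\varphi}: K/\mathfrak{a}\hookrightarrow \bigoplus_{\mathfrak{p}}K_{\mathfrak{p}}/\mathfrak{a}_{\mathfrak{p}}$.

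For surjectivity, take a finitely supported element $(\bar{y}_{\mathfrak{p}})_{\mathfrak{p}\in S}$ of the target, lift each class to $y_{\mathfrak{p}}\in K_{\mathfrak{p}}$, set $y_{\mathfrak{p}}=0$ for $\mathfrak{p}\notin S$, and let $T=\{\mathfrak{p} : v_{\mathfrak{p}}(\mathfrak{a})\ne 0\}$, $F=S\cup T$ (both finite). The task is to find $x\in K$ such that
\[ v_{\mathfrak{p}}(x-y_{\mathfrak{p}})\ge v_{\mathfrak{p}}(\mathfrak{a}) \text{ for all } \mathfrak{p}\in F, \quad v_{\mathfrak{p}}(x)\ge 0 \text{ for all } \mathfrak{p}\notin F. \]
The finitely many congruence conditions at primes of $F$ are handled by weak approximation; the genuine content lies in the second family, which forces $x$ to remain regular at every finite prime outside $F$ while allowing a pole of arbitrary order at $\infty$. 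This is precisely strong approximation for $K$ away from the distinguished place $\infty$, a standard consequence of the Riemann-Roch theorem applied to a divisor supported on $F\cup\{\infty\}$ of sufficiently large degree at $\infty$. With such $x$ in hand, $\bar{\varphi}(x) = (\bar{y}_{\mathfrak{p}})$, and $\bar{\varphi}$ is an isomorphism.

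The main obstacle is this surjectivity step: weak approximation alone controls only finitely many places and cannot guarantee the integrality condition at the infinitely many remaining primes of $A$. It is essential to invoke strong approximation, whose applicability here relies crucially on the existence of the distinguished place $\infty$, at which no integrality condition is imposed.
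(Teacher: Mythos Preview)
Your argument is correct. The paper itself gives no proof: it simply refers to \cite{Si}, Remark~8.1.1, and adds only that the isomorphism there is one of $A$-modules. Your explicit construction via the diagonal localization map, with kernel computed by local determination of fractional ideals and surjectivity handled by strong approximation away from $\infty$, supplies exactly what the paper leaves to a citation.

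One minor comment: invoking strong approximation (equivalently Riemann--Roch) certainly works, but it is heavier than strictly necessary. Since $K/\gota$ is a torsion module over the Dedekind domain $A$, it decomposes as the direct sum of its $\gotp$-primary components $\bigl(\bigcup_{n}\gotp^{-n}\gota\bigr)/\gota$, and the natural map identifies each of these with $K_{\gotp}/\gota_{\gotp}$ using only that $A/\gotp^{n}\cong A_{\gotp}/\gotp^{n}A_{\gotp}$. Your diagnosis of the obstacle---that weak approximation alone cannot control integrality at the infinitely many remaining finite primes---is nonetheless exactly right, and your resolution via the distinguished place $\infty$ is valid.
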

\begin{proof}
See \cite{Si}, Remark 8.1.1, where we note additionally that the isomorphism respects the $A$-module structure.
\end{proof}

For every $x=(x_{v})\in \mathbb{I}_{K}$ the ideal generated by $x$ is defined \[(x):=\prod_{\gotp\subset A}\gotp^{\ord_{\gotp}(x_{\gotp})}.\]
Moreover, for every $\gota\subset A$ an ideal we denote \[x\gota:=(x)\gota.\]
Note that \[(x\gota)_{\gotp}=(x)\gota A_{\gotp}=x_{\gotp}\gota A_{\gotp}=x_{\gotp}\gota_{\gotp}.\]
Lemma \ref{Lemma 8.1} now gives the following isomorphism of $A$-modules :\[K/x\gota\simeq \bigoplus_{\gotp}K_{\gotp}/x_{\gotp}\gota_{\gotp}.\]
We then define the action of $\mathbb{I}_{K}$ on $K/\gota$ so that the following diagram commutes
\begin{diagram}
K/\gota&\rTo^{x}&K/x\gota   \\
\dTo^{\simeq}& &\dTo_{\simeq} \\  \bigoplus_{\gotp}K_{\gotp}/\gota_{\gotp}&\rTo^{x}&\bigoplus_{\gotp}K_{\gotp}/x_{\gotp}\gota_{\gotp}  \\
\end{diagram}
where the bottom horizontal arrow is defined
\[(t_{\gotp})\mapsto (x_{\gotp}t_{\gotp}).\]

\begin{MainThm}
Let $\mathbb{D}=(\C_{\infty},\uprho)$ be a rank $\text{\rm 1}$ Drinfeld $A$-module. Let $\upsigma\in {\rm Aut}(\C_{\infty}/K)$ and
 let $s\in \mathbb{I}_{K}$ be such that:\[[s,K]=\upsigma|_{K_{A}^{\rm ab}}.\]Let $\gota$ be a fractional ideal of $A$. Given an analytic isomorphism of $A$-modules
\[f:\C_{\infty}/\gota\longrightarrow \mathbb{D},\]
there exists an analytic isomorphism of $A$-modules
\[f':\C_{\infty}/s^{-1}\gota\longrightarrow \mathbb{D}^{\upsigma},\]
whose restriction to $K_{\infty}$ is unique, 
such that the following diagram commutes:\[\begin{diagram}K/\gota&\rTo^{s^{-1}}&K/s^{-1}\gota\\\dTo_{f}& &\dTo_{f'}\\\mathbb{D}&\rTo^{\upsigma}&\mathbb{D}^{\upsigma}\end{diagram}.\]
\end{MainThm}

\begin{note} The statement and proof of the Main Theorem for rank 1 Drinfeld $A$-modules differs from accounts of its elliptic curve analog (c.f.\ \cite{Shi}, \cite{La}, \cite{Si}) in several respects.

\vspace{3mm}

\noindent i. In \cite{Si}, the maps $f$, $f'$ are only assumed to be analytical group isomorphisms of elliptic curves, and not isomorphisms of modules over (an order in) the ring of integers $\mathcal{O}_{K}$
of the complex quadratic extension $K/\Q$. The very definition of Drinfeld module includes the action of ``complex multiplication'' i.e.\ the $A$-action and so in this category, analytic isomorphism
means in particular an isomorphism of $A$-modules.  

\vspace{3mm}

\noindent ii.  On the other hand, note that the version that appears on page 117 of \cite{Shi} is stated in terms of {\it normalized elliptic curves} $(E,\uptheta )$, where $\uptheta:K
\rightarrow {\rm End}_{\Q}(E)$ is a fixed isomorphism.  The use of normalized elliptic curves has the effect that the analog of $f$ is essentially a map of modules over endomorphism rings.
\end{note}
\begin{proof}  

We begin with a standard observation: if $(\mathbb{D}_{1},f_{1})$ is another rank 1 Drinfeld module isomorphic to $\mathbb{D}$, with $f_{1}:\C_{\infty}/\mathfrak{a}\rightarrow \D_{1}$ an analytic isomorphism, then if the Theorem holds for $(\mathbb{D}_{1},f_{1})$, it holds also for $(\mathbb{D},f)$. 
The proof of this is straight-forward, formally identical to that appearing, for example, on pages 160-161 of \cite{Si}.
This allows us to assume that 
\begin{enumerate}
\item[1.] $\D$ is a Hayes module i.e.\ it is sign normalized and its coefficients belong to $H_{A}^{1}$ = the narrow Hilbert class field of $A$.
\item[2.] $\gota$ is an integral ideal of $A$.
\end{enumerate}

Fix $\mathfrak{m}\subset A$ an ideal. The first step will be to show the existence of a commutative diagram 
\begin{align}\label{1step} \begin{diagram}\gotm^{-1}\gota/\gota&\rTo^{s^{-1}}&\gotm^{-1}s^{-1}\gota/s^{-1}\gota\\\dTo_{f}& &\dTo_{f'_{\gotm}}\\\mathbb{D}&\rTo^{\upsigma}&\mathbb{D}^{\upsigma}\end{diagram}
\end{align}
where $f'_{\mathfrak{m}}$ is the restriction of an analytical isomorphism $\C_{\infty}/s^{-1}\mathfrak{a}\rightarrow \D^{\upsigma}$. 
Let $L\subset K^{\rm ab}_{A}$ be a finite Galois extension of $K$ containing
 \begin{itemize}
\item the narrow ray class field $ H^{1}_{A}$: so that $\D$ is defined over $L$.
\item the torsion submodule
 $\D [\mathfrak{m}]$. 
 \item the ray class field\footnote{Hayes' Theorem \cite{Hayes} says that the narrow ray class field $K^{\mathfrak{m},1}_{A}\supset K^{\mathfrak{m}}_{A}$
is generated over $H^{1}_{A}$ by $\D[\mathfrak{m}]$, so in theory we do not need to assume $K^{\mathfrak{m}}_{A}\subset L$: however we shall not assume Hayes' result here.}  $K^{\mathfrak{m}}_{A}$.
 \end{itemize}
Let $\mathcal{O}_{L}\supset A$ be the integral closure of $A$ in $L$ and choose a prime $\mathfrak{P}\subset \mathcal{O}_{L}$ with the following properties: if $\mathfrak{p}=\mathfrak{P}\cap A$, then
   \begin{itemize}
 \item[\ding{192}] $L$ is unramified at $\mathfrak{p}$.
 \item[\ding{193}] $\upsigma|_{L} =[s,K]|_{L}=\upsigma_{\mathfrak{p}}=(\mathfrak{p}, L/K)$.
 \item[\ding{194}] $\mathfrak{p}\not|\mathfrak{m}$.
  \item[\ding{195}]  $\D$ has good reduction at $\mathfrak{P}$.
 \end{itemize}
 
 The existence of such a $\mathfrak{P}$ is guaranteed by the Chebotarev Density Theorem.

Let $\uppi\in \mathbb{I}_{K}$ be such that it has a uniformizer at the $\gotp$ component and is $1$ everywhere else. Then by (\ref{FrobNote})
\[ [\uppi,K]=(\gotp, L/K),\] 
hence $[s\uppi^{-1},K]$ acts trivially on $K_{A}^{\gotm}\subset L$. Therefore, by the idelic characterization of $K_{A}^{\gotm}$ (see (\ref{RayClassChar})),\[s\uppi^{-1}=\upalpha u\]
where $\upalpha\in K^{\times}$ and $u\in U^{\mathfrak{m}}_{A}$ (for all $\mathfrak{q}$ prime, $u_{\mathfrak{q}}\in 1+\mathfrak{m}A_{\gotq}$). 

Consider the isogeny
\[\uprho_{\gotp}: \D\longrightarrow \gotp\ast \D=\D^{\upsigma}\]
where the equality on the right hand side comes from (\ref{HayesThm}). 
We recall by {\it Note} \ref{RedFrob} that its reduction mod $\mathfrak{P}$ satisfies $\widetilde{\uprho_{\gotp}}=\tau^{\deg(\gotp)}$.  The key observation in the proof of the Main Theorem is that we can replace the {\it discontinuous
automorphism} $\upsigma$ by the {\it analytic endomorphism} $\uprho_{\mathfrak{p}}$ if we restrict to $\D[\mathfrak{m}]$: that is, 
we claim that the following diagram commutes
\begin{equation}\label{diagram1}\begin{diagram}\D[\gotm]&\rTo^{\uprho_{\gotp}}&\D^{\upsigma}[\gotm]\\\dInto& &\dInto\\\mathbb{D}&\rTo^{\upsigma}&\mathbb{D}^{\upsigma}\end{diagram},\end{equation}
where the vertical arrows are the inclusions.
Indeed, for every torsion point $t\in \D[\gotm]$, we have the equalities mod $\mathfrak{P}$
\[\widetilde{\uprho_{\gotp}(t)}=\widetilde{\uprho_{\gotp}}(\, \widetilde{t}\,)= \widetilde{t}^{\;\widetilde{\upsigma}}= \widetilde{\;t^{\upsigma}}\] 
since $\widetilde{\uprho_{\gotp}}=\tau^{\deg(\gotp)}=\widetilde{\upsigma}$. 
As $\mathfrak{P}\not|\mathfrak{m}$, by Lemma \ref{injtor}, the reduction map 
\[ \D^{\upsigma}[\mathfrak{m}]\rightarrow\widetilde{ \D^{\upsigma}}[\mathfrak{m}]\]
is injective, hence
\[  \uprho_{\gotp}(t) = t^{\upsigma}. \]

We now fix\[f'':\C_{\infty}/\gotp^{-1}\gota\longrightarrow \D^{\upsigma}\]an analytic isomorphism such that the following diagram commutes:
\[\begin{diagram}\C_{\infty}/\gota&\rTo^{\rm can}&\C_{\infty}/\gotp^{-1}\gota\\\dTo^{f}& &\dTo_{f''}\\\mathbb{D}&\rTo_{\uprho_{\gotp}}&\mathbb{D}^{\upsigma}.\end{diagram}\]
where ${\rm can}$ is the canonical inclusion $x + \gota\mapsto x+\gotp^{-1}\gota$.
The existence of $f''$ is a consequence of the fact that the kernel of the additive homomorphism $\uprho_{\mathfrak{p}}$ is $\D [\mathfrak{p}]$, whose pre-image 
by the analytical $A$-module isomorphism $f$ is exactly $\mathfrak{p}^{-1} \mathfrak{a}$.

Using the decomposition $s=\upalpha\uppi u$, 
we have \[(s)=(\upalpha)(\uppi)=(\upalpha)\gotp.\]Therefore $s^{-1}\gota=(s^{-1})\gota=\upalpha^{-1}\gotp^{-1}\gota$, and 
 multiplication by $\upalpha^{-1}$ gives the following 
isomorphism:\[\upalpha^{-1}:\C_{\infty}/\gotp^{-1}\gota\longrightarrow \C_{\infty}/s^{-1}\gota.\]We can then
form the following 
diagram:\begin{equation}\label{diagram2}\begin{diagram}\C_{\infty}/\gota&\rTo^{\rm can}&\C_{\infty}/\gotp^{-1}\gota&\rTo^{\upalpha^{-1}}&\C_{\infty}/s^{-1}\gota\\\dTo^{f}& &\dTo_{f''}& 
&\dTo_{f'}\\\mathbb{D}&\rTo_{\uprho_{\gotp}}&\mathbb{D}^{\upsigma}&\rTo_{\rm id}&\mathbb{D}^{\upsigma}\end{diagram}\end{equation}where $f'$ is the unique analytic isomorphism making the diagram  commute. 
\begin{clai}  For all $t\in \gotm^{-1}\gota/\gota$, 
\[f(t)^{\upsigma}=f'(s^{-1}t).\]
\end{clai}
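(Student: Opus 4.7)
\medskip

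\noindent\textbf{Proof plan.} The claim has two parts: a diagram chase that evaluates $f(t)^{\upsigma}$ through the constructed $f'$, and a local‑at‑each‑prime comparison showing that the idèlic action of $s^{-1}$ and the scalar multiplication by $\upalpha^{-1}$ agree on $\gotm^{-1}\gota/\gota$ once one passes to $K/s^{-1}\gota$. The first part is essentially bookkeeping; the second part, which uses the factorisation $s=\upalpha\uppi u$ with $u\in U^{\gotm}_{A}$ in an essential way, is the real content.

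\medskip

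\noindent\textbf{Step 1: compute $f(t)^{\upsigma}$.} Given $t\in\gotm^{-1}\gota/\gota$, the image $f(t)$ lies in $\D[\gotm]$. Commutativity of diagram (\ref{diagram1}) then gives $f(t)^{\upsigma}=\uprho_{\gotp}(f(t))$. Plugging this into diagram (\ref{diagram2}) and using the commutativity of its left square yields $\uprho_{\gotp}(f(t))=f''(\mathrm{can}(t))$, and then the right square of (\ref{diagram2}) converts $f''$ into $f'$: explicitly, $f''(x)=f'(\upalpha^{-1}x)$ for all $x\in \C_{\infty}/\gotp^{-1}\gota$. Altogether,
\[
f(t)^{\upsigma}=f'\bigl(\upalpha^{-1}\,t\bigr),
\]
where $\upalpha^{-1}t$ is viewed in $\C_{\infty}/s^{-1}\gota=\C_{\infty}/\upalpha^{-1}\gotp^{-1}\gota$.

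\medskip

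\noindent\textbf{Step 2: identify $\upalpha^{-1}t$ with $s^{-1}t$ in $K/s^{-1}\gota$.} By the definition of the idèlic action on $K/\gota$ via Lemma \ref{Lemma 8.1}, it suffices to show, for every prime $\gotq\subset A$, that the $\gotq$-components satisfy
\[
\upalpha^{-1}t_{\gotq}\;\equiv\;s_{\gotq}^{-1}t_{\gotq}\pmod{s_{\gotq}^{-1}\gota_{\gotq}},
\]
since $(s^{-1}\gota)_{\gotq}=s_{\gotq}^{-1}\gota_{\gotq}$. Using $s=\upalpha\uppi u$, we have $s_{\gotq}^{-1}=\upalpha^{-1}\uppi_{\gotq}^{-1}u_{\gotq}^{-1}$, and the difference to be controlled is $\upalpha^{-1}(1-\uppi_{\gotq}^{-1}u_{\gotq}^{-1})t_{\gotq}$. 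Split into cases:

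For $\gotq\neq \gotp$: then $\uppi_{\gotq}=1$ and $u_{\gotq}\in 1+\gotm A_{\gotq}$, hence $u_{\gotq}^{-1}\in 1+\gotm A_{\gotq}$ and $(1-u_{\gotq}^{-1})\in \gotm A_{\gotq}$. Since $t\in\gotm^{-1}\gota/\gota$ gives $t_{\gotq}\in\gotm^{-1}\gota_{\gotq}$, the product lies in $\gota_{\gotq}$, so $\upalpha^{-1}(1-u_{\gotq}^{-1})t_{\gotq}\in \upalpha^{-1}\gota_{\gotq}=s_{\gotq}^{-1}\gota_{\gotq}$ (the last equality because $u_{\gotq}^{-1}\in A_{\gotq}^{\times}$).

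For $\gotq=\gotp$: hypothesis \ding{194} gives $\gotp\nmid\gotm$, so $\gotm A_{\gotp}=A_{\gotp}$ and hence $t_{\gotp}\in\gotm^{-1}\gota_{\gotp}=\gota_{\gotp}$. Then both $\upalpha^{-1}t_{\gotp}\in \upalpha^{-1}\gota_{\gotp}\subset \upalpha^{-1}\uppi_{\gotp}^{-1}\gota_{\gotp}=s_{\gotp}^{-1}\gota_{\gotp}$ and $\upalpha^{-1}\uppi_{\gotp}^{-1}u_{\gotp}^{-1}t_{\gotp}=s_{\gotp}^{-1}t_{\gotp}\in s_{\gotp}^{-1}\gota_{\gotp}$, so their difference is trivially in $s_{\gotp}^{-1}\gota_{\gotp}$.

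Combining the two cases, $\upalpha^{-1}t=s^{-1}t$ in $K/s^{-1}\gota$. Applying $f'$ and comparing with Step~1 yields $f(t)^{\upsigma}=f'(s^{-1}t)$, as claimed.

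\medskip

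\noindent\textbf{Where the difficulty lies.} The diagram chase in Step~1 is formal. The genuine obstacle is Step~2: one must verify that the two a priori unrelated actions — multiplication by the global scalar $\upalpha^{-1}$ on $K$ and the prime‑by‑prime idèlic action of $s^{-1}$ — coincide modulo $s^{-1}\gota$ when restricted to the $\gotm$-torsion. This works precisely because of the two constraints on $s$: the factor $u\in U^{\gotm}_{A}$ has $u_{\gotq}\equiv 1\pmod{\gotm A_{\gotq}}$, which annihilates the discrepancy off $\gotp$ after pairing with an $\gotm$-torsion element; and at $\gotp$ the condition $\gotp\nmid\gotm$ forces $\gotm^{-1}\gota_{\gotp}=\gota_{\gotp}$, which is why the $\uppi_{\gotp}^{-1}$-denominator does not escape $s_{\gotp}^{-1}\gota_{\gotp}$. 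Both of these are the reason the auxiliary prime $\gotp$ was required to satisfy \ding{192}–\ding{195}.
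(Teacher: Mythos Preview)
Your proof is correct and follows essentially the same approach as the paper's: a diagram chase through (\ref{diagram1}) and (\ref{diagram2}) to reduce to $f'(\upalpha^{-1}t)=f'(s^{-1}t)$, followed by a prime-by-prime verification using $s=\upalpha\uppi u$ with $u\in U^{\gotm}_{A}$ and the hypothesis $\gotp\nmid\gotm$. The only cosmetic difference is that the paper first clears denominators by multiplying through by $s_{\gotq}$, reducing to $(\uppi_{\gotq}u_{\gotq}-1)\gotm^{-1}\gota_{\gotq}\subset\gota_{\gotq}$, whereas you work directly with the difference $\upalpha^{-1}(1-\uppi_{\gotq}^{-1}u_{\gotq}^{-1})t_{\gotq}$; both amount to the same computation.
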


\vspace{3mm}

\noindent {\it Proof of the Claim}:
By (\ref{diagram1}) we know that $f(t)^{\upsigma}=\uprho_{\gotp}(f(t))$ for every $t\in \gotm^{-1}\gota/\gota$. 
Using the commutativity of (\ref{diagram2}) the statement reduces to proving that \[f'(\upalpha^{-1}t)=f'(s^{-1}t).\]
This is equivalent to showing that
 \[\upalpha^{-1}t-s^{-1}t\in s^{-1}\gota\]for all $t\in \gotm^{-1}\gota$. Or equivalently,
  \[\upalpha^{-1}t-s_{\gotq}^{-1}t\in s_{\gotq}^{-1}\gota_{\gotq}\]for all $t\in \gotm^{-1}\gota_{\gotq}$ and for all $\gotq$ prime ideals in $A$. 
  Since $s_{\gotq}=\upalpha\uppi_{\gotq}u_{\gotq}$, we are reduced to showing that\[\uppi_{\gotq}u_{\gotq}t-t\in \gota_{\gotq}, \] i.e.\
   \[(\uppi_{\gotq}u_{\gotq}-1)\gota_{\gotq}\subset \gotm\gota_{\gotq}.\]
   Since $u_{\gotq}\in A_{\gotq}^{\times}$ and satisfies $u_{\gotq}\equiv 1$ mod $\mathfrak{m}A_{\gotq}$ we must show that:\[(\uppi_{\gotq}-1)\gota_{\gotq}\subset \gotm\gota_{\gotq}.\]If $\gotq\neq \gotp$, the statement follows since $\uppi_{\gotq}=1$. If $\gotq=\gotp$ we have instead that:\[(\uppi_{\gotp}-1)\gota_{\gotp}=\gota_{\gotp},\]
since $\uppi_{\mathfrak{p}}-1$ is a unit.
By hypotheses, $\gotp\not| \gotm,$ and therefore $\gotm A_{\gotp}=A_{\gotp}$. In particular:\[\gotm\gota_{\gotp}=\gota_{\gotp}.\]This proves the claim. 

\vspace{3mm}

The choice $f'_{\mathfrak{m}} = f'$ makes the diagram (\ref{1step}) commute.
Since $K/\gota=\bigcup_{\gotm}\gotm^{-1}\gota/\gota$, it is enough to
show that these diagrams are compatible and so fit together to produce the diagram appearing in the statement of the Main Theorem.  
Let $\gotn\subset \gotm$; note that \[ \xi:=f'_{\gotn}\circ {f'_{\gotm}}^{-1}\in {\rm Aut}(\mathbb{D}^{\upsigma})=A^{\times}=\mathbb{F}^{\times}_{q}.\] It will be enough to show that \[f'_{\gotn}|_{\gotm^{-1}s^{-1}\gota/s^{-1}\gota}=f'_{\gotm}.\] This follows, since \[\xi f'_{\gotm}(s^{-1}t)=f'_{\gotn}(s^{-1}t)=f(t)^{\upsigma}=f'_{\gotm}(s^{-1}t), \] so that in particular, $\xi$ acts as the identity on $\mathbb{D}^{\upsigma}[\gotm]$ and thus $\xi=1$. 
\end{proof}

\section{Modular Invariant of Rank 1 Drinfeld Modules}\label{ModInvSec}

We establish notation for the discussion which follows, see \S 5 of \cite{Th}. 
Recall that $K_{\infty}$ = the completion of $K$ at $\infty$ may be identified with the Laurent series field
\[ K_{\infty}= \F_{\infty}((u_{\infty}))\]
where $u_{\infty}$ is a uniformizer at $\infty$.    We also choose a sign function
\[ {\rm sgn}:K_{\infty}^{\times}\longrightarrow \F^{\times}_{\infty}\]
so that if $x=c_{N}u_{\infty}^{N} + $ lower, then $\text{sgn}(x)=c_{N}$.   
Let $S$ be a set of representatives of $\F^{\times}_{\infty}/\F^{\times}_{q}$, where we assume $1\in S$.
We say that $f\in K_{\infty}$ is positive if ${\rm sgn}(f)\in S$.  Note then that monic (sgn one) elements are positive and that
any element $f\in K$ has the property that $cf$ is positive for some $c\in\F^{\times}_{q}$.   In addition, we note that by our choice of sign function, adding to a positive element an element of lower degree produces another positive element.

We denote by 
\[ \mathfrak{a}^{+} = \{x\in\mathfrak{a}|\; {\rm sgn}(x)\in S\}. \]
Notice that we have a disjoint union decomposition
\[  \mathfrak{a}^{+} = \bigsqcup_{s\in S} \mathfrak{a}_{s}^{+} ,\quad \mathfrak{a}_{s}^{+} = \{ x\in\mathfrak{a}|\; {\rm sgn}(x)=s\}.\]

Although $S$ need not be a group, for any fixed $t\in S$ and for all $s\in S$, there exists $c_{s}\in \F_{q}$ such that
\[ c_{s} st\in S. \]
We also introduce the notation
\[  \mathfrak{a}_{s} =\F_{q}^{\times}\mathfrak{a}_{s}^{+} \]
which depends only on the class of $s$ in $\F^{\times}_{\infty}/\F_{q}^{\times}$: this allows us to write, abusively, $\mathfrak{a}_{st}$ and
we have
\[    \mathfrak{a}_{s}  \mathfrak{a}_{t} \subset    \mathfrak{a}_{st} .\]

We give a definition of $j$-invariant for rank 1 Drinfeld modules, which was first introduced in \cite{GD3} for quadratic real extensions of $\F_{q}(T)$ and was 
applied to the study of the quantum $j$-invariant.  As before, the formula is based on E.-U. Gekeler's $j$ invariant for rank $2$ complex multiplication Drinfeld modules \cite{Gekeler}. 
Let $\mathbb{D}\simeq \mathbb{C}_{\infty}/\gota$, where $\gota$ is a fractional $A$-ideal. 
We define:\[\zeta^{\gota}(n):=\sum_{x\in \gota^{+}}x^{-n}\]for any $n\in \mathbb{N}$: notice that this expression is only non-0 for elements of the form $n=m(q-1)$ (which are usually called ``even''),
and in what follows we will therefore assume $n$ is of this form.
We call\[J(\gota):=\frac{\zeta^{\gota}(q^{2}-1)}{\zeta^{\gota}(q-1)^{q+1}} \] ands then define 
\begin{align}\label{defnofj} j(\gota):=\frac{1}{\frac{1}{T^{q}-T}-\frac{T^{q^{2}}-T}{(T^{q}-T)^{q+1}}J(\gota)}.\end{align}
Notice that if $\upalpha\in K^{\times}$ has sgn one then
\[  ((\upalpha) \mathfrak{a})^{+}=\upalpha\mathfrak{a}^{+}.\]
This implies that 
\[  j((\upalpha)\mathfrak{a})=  j(\mathfrak{a})\] and so we obtain
a well-defined function on the narrow class group
\[  j=j_{A}:{\sf Cl}_{1}(A)\longrightarrow  \mathbb{C}_{\infty}.\]
In fact, it is a class invariant:

\begin{prop}\label{classinv} $j$ induces a well-defined function 
\[  j:{\sf Cl}(A)\longrightarrow  \mathbb{C}_{\infty}.\]
\end{prop}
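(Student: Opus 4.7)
Since $j$ has already been shown to be well-defined on ${\sf Cl}_1(A)$, the task reduces to proving $j((\upbeta)\gota)=j(\gota)$ for an arbitrary $\upbeta\in K^\times$. My plan is to establish this by a direct transformation formula for the Eisenstein-type series $\zeta^\gota(n)$ under $\gota\mapsto\upbeta\gota$. The only new phenomenon beyond the sgn-one case already treated is that ${\rm sgn}(\upbeta)=c\in\F_\infty^\times$ need not lie in $\F_q^\times$, so multiplication by $\upbeta$ will shuffle the pieces in the decomposition $\gota^+=\bigsqcup_{s\in S}\gota_s^+$.

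First I would pin down this shuffling. For each $s\in S$, write $cs=u_s\upphi(s)$ with $u_s\in\F_q^\times$ and $\upphi(s)\in S$; then $\upphi\colon S\to S$ is a permutation, and the assignment $x\mapsto u_s^{-1}\upbeta x$ defines a bijection $\gota_s^+\to(\upbeta\gota)^+_{\upphi(s)}$. Summing over $s$ and re-indexing via $\upphi$ yields
\[ \zeta^{\upbeta\gota}(n)=\upbeta^{-n}\sum_{s\in S}u_s^n\sum_{x\in\gota_s^+}x^{-n}. \]

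The decisive point is the restriction to ``even'' exponents $n=m(q-1)$: since $u_s\in\F_q^\times$, one has $u_s^n=1$ for every $s$, so the weighting collapses and
\[ \zeta^{\upbeta\gota}(n)=\upbeta^{-n}\zeta^\gota(n). \]
Inserting this into $J(\gota)=\zeta^\gota(q^2-1)/\zeta^\gota(q-1)^{q+1}$, the $\upbeta$-factors cancel precisely because $q^2-1=(q-1)(q+1)$; hence $J((\upbeta)\gota)=J(\gota)$, and therefore $j((\upbeta)\gota)=j(\gota)$.

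The only genuine obstacle is the bookkeeping for the permutation $\upphi$ and the auxiliary cocycle $\{u_s\}_{s\in S}$ coming from the fact that $S$ is merely a set of coset representatives of $\F_\infty^\times/\F_q^\times$ rather than a group. This evaporates harmlessly because $\F_q^\times$-twists are invisible to exponents divisible by $q-1$; no deeper input is required beyond the decomposition $\gota^+=\bigsqcup_{s\in S}\gota_s^+$ and the elementary identity $q^2-1=(q-1)(q+1)$.
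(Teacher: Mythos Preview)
Your proof is correct and follows essentially the same route as the paper's: decompose $\gota^{+}=\bigsqcup_{s\in S}\gota_{s}^{+}$, track how multiplication by $\upbeta$ permutes the pieces up to $\F_{q}^{\times}$-twists, and observe that these twists vanish for exponents divisible by $q-1$, yielding $\zeta^{\upbeta\gota}(n)=\upbeta^{-n}\zeta^{\gota}(n)$. The only cosmetic difference is that you name the permutation $\upphi$ and the cocycle $\{u_{s}\}$ explicitly, whereas the paper uses its pre-defined constants $c_{s}$ (with $u_{s}=c_{s}^{-1}$) and the ``abusive'' subscript $st$; the paper also first normalizes so that ${\rm sgn}(\upalpha)\in S$, which you correctly note is unnecessary.
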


\begin{proof} Define for each $s\in S$
\[  \upzeta^{\mathfrak{a}}_{s}(n) = \sum_{x\in\mathfrak{a}^{+}_{s}} x^{-n}\]
so that 
\[  \upzeta^{\mathfrak{a}}(n) = \sum_{s\in S}  \upzeta^{\mathfrak{a}}_{s}(n).\] 
We must show that for any $\upalpha\in K^{\times}$, $j(\upalpha\mathfrak{a})=j(\mathfrak{a})$;
without loss of generality, we may assume ${\rm sgn}(\upalpha)=t\in S$.
Then 
\[  (\upalpha \mathfrak{a})_{st}^{+} = c_{s} \upalpha \mathfrak{a}_{s}^{+} \]
and
\[  \upzeta^{\upalpha\mathfrak{a}}(n)  =\upalpha^{-n} \sum_{s\in S} c^{-n}_{s} \upzeta^{\mathfrak{a}}_{s}(n) . \]
Since $n$ is even, $c^{-n}_{s}=1$, so this gives $ \upzeta^{\upalpha\mathfrak{a}}(n)  = \upalpha^{-n} \upzeta^{\mathfrak{a}}(n) $, and by the formula for $j$, we are done.
\end{proof}

\begin{note}\label{signindepzeta} The proof of Proposition \ref{classinv} shows that $\upzeta^{\mathfrak{a}}(n)$ is independent of the choice of signs $S$: that is, if we replace $s\in S$ by $cs$, $c\in\F^{\times}_{q}$, then
\[ \upzeta^{\mathfrak{a}}_{cs} (n) = c^{-n} \upzeta^{\mathfrak{a}}_{s} (n) =\upzeta^{\mathfrak{a}}_{s} (n) .\]
\end{note}


It will be useful to have the following explicit description of $A$:
\begin{align}\label{Abasis} A =\F_{q}[f_{1},f_{2},\dots ,f_{N}]   = \langle 1, f_{1},f_{2},\dots , f_{N}, f_{N+1},\dots \rangle_{\F_{q}}\end{align}
where the presentation on the far right is that of an $\F_{q}$ vector space, with the additional vector space generators $f_{N+1},\dots$ complementing the ring generators $f_{1},\dots , f_{N}$.  We assume
that $0<\deg (f_{1})<\deg (f_{2})<\cdots $ and by the Weiestrass theory (see for example \cite{HKT}), eventually $\deg (f_{i+1})=\deg (f_{i})+1$.  

The remaining part of this section is devoted to proving results which will be used to show that $j$ is a complete invariant: in other words, we will show that $j$ is injective as a function of ${\sf Cl}(A)$,
see Corollary \ref{cor} of the next section.   This material was presented in 
 \cite{GD3} in the special case when $K$ is quadratic and real.  The arguments we present here are a modification of those appearing in \cite{GD3} and apply to the general case, taking
 into account the notion of positivity when $d_{\infty}>1$.

Fix $\mathfrak{a}\subset A$ a non-principal ideal. We begin by choosing a convenient representative of the class $[\mathfrak{a}] \in{\sf Cl}(A)$ as well as specifying for the representative
an $\F_{q}$-vector space basis.  Write $\mathfrak{a}=(g,h)$ and suppose that $\deg (g)<\deg (h)$.  Without loss of generality, we may assume
 $g$ has the smallest degree of all positive non-zero
elements\footnote{Since $A$ is Dedekind, for any non-zero $x\in\mathfrak{a}$, there exists $y\in\mathfrak{a}$ with $\mathfrak{a}=(x,y)$.
See for example Theorem 17 of \cite{Mar}, page 61.} of $\mathfrak{a}$.  Since $\mathfrak{a}$ is not principal, $h/g$ does not belong to $A$.
 Consider an $\F_{q}$-vector space basis of $\mathfrak{a}$ \begin{align}\label{vecbasis}\{ a_{0}=g,a_{1},\dots \},\end{align} 
 which includes $a_{i_{0}}=h$ for some $i_{0}$ and includes the elements $gf_{1}, gf_{2},\dots$, where the $f_{i}$ are as in (\ref{Abasis}).  
The map $a_{i}\mapsto \deg (a_{i})$ is injective and we will order elements so that $\deg (a_{i})<\deg (a_{j})$ for $i<j$.  

We will also assume that
the $a_{i}$ are positive with respect to the sign function: that is, ${\rm sgn}(a_{i})\in S$.  Note that any element $\upalpha$ having degree equal to $\deg (\upalpha_{i})$ satisfies
${\rm sgn}(\upalpha )\equiv {\rm sgn}(\upalpha_{i})\mod \F^{\times}_{q}$: indeed, $\upalpha =c\upalpha_{i}+$ lower order terms, and since our chosen sign function is unaffected by adding lower order terms, ${\rm sgn}(\upalpha ) = c\cdot {\rm sgn}(\upalpha_{i})$.

Consider the fractional ideal 
\begin{align}\label{astarlist} \mathfrak{a}^{\star}:=g^{-1}\mathfrak{a} = \langle \upalpha_{0}=1,\upalpha_{1}, \upalpha_{2},\dots ,\upalpha_{n},f_{1}, f_{2},\dots  \rangle_{\F_{q}},\quad \upalpha_{i}:=a_{i}/g.\end{align}
  Since $\mathfrak{a}$ is not principal, we have 
$ \mathfrak{a}^{\star}\supsetneq A=(1)$.   

\begin{lemm}\label{basislemma} $|\upalpha_{1}|\not=|f_{1}|$.
\end{lemm}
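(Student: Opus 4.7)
My plan is to argue by contradiction: assume $\deg(\upalpha_1) = \deg(f_1)$ and derive $\upalpha_1 \in A$, contradicting the basis representation in (\ref{astarlist}). Since $|\cdot|=q^{\deg(\cdot)}$, any strict inequality between $\deg(\upalpha_1)$ and $\deg(f_1)$ already yields $|\upalpha_1|\neq |f_1|$, so only the equality case requires work.

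First I will observe, from the $\F_{q}$-linear independence of the basis in (\ref{astarlist}), that $\upalpha_1\notin A$ (it is listed as distinct from the $A$-basis $\{1,f_1,f_2,\dots\}$). Then, assuming $\deg(\upalpha_1)=\deg(f_1)$, I will consider the element $g\upalpha_1\in \mathfrak{a}$, whose degree equals $\deg(g)+\deg(f_1)=\deg(gf_1)$. Because the basis $\{a_i\}$ of $\mathfrak{a}$ has strictly increasing distinct degrees (injectivity of $a_i\mapsto\deg(a_i)$), the expansion $g\upalpha_1=\sum c_i a_i$ can only involve basis elements of degree at most $\deg(gf_1)$, and in particular the maximum-degree term that appears must be of degree exactly $\deg(gf_1)$.

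The key step is to show that the only $a_i$ with $\deg(a_i)\leq\deg(gf_1)$ are $a_0=g$ and $gf_1$ itself (the latter belonging to $\{a_i\}$ by construction). I intend to exclude any extra basis element $a_k$ with $\deg(g)<\deg(a_k)<\deg(gf_1)$ as follows: such an $a_k$ would yield $\upalpha_k=a_k/g\in\mathfrak{a}^{\star}$ with $0<\deg(\upalpha_k)<\deg(f_1)$, which is impossible because either $\upalpha_k\in A$ (contradicting that $f_1$ has the smallest positive degree in $A$) or $\upalpha_k\notin A$ (contradicting the minimality of $\upalpha_1$ as the smallest-degree element of $\mathfrak{a}^{\star}\setminus A$, which is the natural reading of the labeling in (\ref{astarlist})). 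Once this step is in hand, one concludes $g\upalpha_1\in\F_{q}\cdot g + \F_{q}\cdot gf_1 \subset gA$, hence $\upalpha_1\in A$, contradicting the initial observation. The main delicate point is precisely this exclusion of intermediate basis elements, which rests on interpreting the $\upalpha_1$ of (\ref{astarlist}) as the minimal-degree representative of $\mathfrak{a}^{\star}$ outside $A$.
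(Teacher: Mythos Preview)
Your approach and the paper's are essentially the same: both argue by contradiction, deduce $\upalpha_{1}\in \F_{q}+\F_{q}f_{1}\subset A$, and then appeal to the independence of the elements listed in (\ref{astarlist}). The difference lies only in how you reach that middle conclusion, and your route is more circuitous than necessary.

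Two simplifications collapse your argument to the paper's. First, by the definition $\upalpha_{i}:=a_{i}/g$ you have $g\upalpha_{1}=a_{1}$, which is \emph{already} a basis element of $\mathfrak{a}$; there is nothing to ``expand'' in the basis. Second, your ``key step'' (excluding intermediate $a_{k}$ with $\deg(g)<\deg(a_{k})<\deg(gf_{1})$) is immediate from the ordering: $a_{1}$ is \emph{by definition} the basis element of smallest degree exceeding $\deg(a_{0})=\deg(g)$, so nothing can lie strictly between. Your case analysis --- and in particular your reliance on reading $\upalpha_{1}$ as ``minimal in $\mathfrak{a}^{\star}\setminus A$'' --- is therefore unnecessary, and that reinterpretation is not something you can take for granted from the paper's definition $\upalpha_{i}:=a_{i}/g$. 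Under the hypothesis $\deg(\upalpha_{1})=\deg(f_{1})$ one gets $\deg(a_{1})=\deg(gf_{1})$; since $gf_{1}$ is itself one of the $a_{i}$'s and the degrees are distinct, $a_{1}=gf_{1}$, hence $\upalpha_{1}=f_{1}\in A$.

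The paper phrases this same reduction slightly differently: it picks $a\in A$ with $\deg(a_{1}+ag)<\deg(a_{1})$ (possible because $\deg(\upalpha_{1})$ lies in the degree set of $A$), then uses the minimality of $\deg(a_{1})$ among $\{\deg(a_{i}):i\geq 1\}$ to force $a_{1}+ag=cg$ for some $c\in\F_{q}$, whence $\upalpha_{1}=c-a\in A$. Both arguments terminate in the same contradiction with the independence of $1,\upalpha_{1},f_{1}$ in (\ref{astarlist}).
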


\begin{proof}  Suppose $|\upalpha_{1}|=|f_{1}|$.  Then $\upalpha_{1}=a_{1}/g$ has degree which appears in the list of degrees realized by elements of $A$. Thus there exists $a\in A$ with 
\[ \deg (a_{1}')<\deg (a_{1}),\quad a_{1}' := a_{1}+ag \in\mathfrak{a} .\] By the shape of the basis of $\mathfrak{a}$, $\deg (a_{1}')=\deg (g)$, but this implies that 
\[    a_{1}'= cg \]
for some $c\in \F^{\times}_{q}$ or $a_{1}= (c-a)g$.  Therefore, $\upalpha_{1} = c-a\in A$ and since $\deg (\upalpha_{1})=\deg (f_{1})$ we have $\upalpha_{1}= c_{1}f_{1}+d_{1}$ for $c_{1}\in\F_{q}^{\times}$ and $d_{1}\in\F_{q}$.  But this contradicts
the fact that the elements displayed in (\ref{astarlist}) are independent.
\end{proof}


In what follows we will work with $\mathfrak{a}^{\star}=g^{-1}\mathfrak{a}$, renaming it $\mathfrak{a}$ and 
fixing the basis satisfying the conditions described above.  That is,
\[\mathfrak{a}=\langle 1,\upalpha_{1},\dots \rangle_{\F_{q}},\]
where $\upalpha_{i}=a_{i}/g$, as specified in the paragraph before Lemma \ref{basislemma}.  Notice that since ${\rm sgn}(g)=t$ is not necessarily 1, 
the basis  $\{ 1,\upalpha_{1},\dots \}$ is no longer positive with respect to $S$ but rather with respect to the new choice of sign representatives $t^{-1}S$. However by {\it Note} \ref{signindepzeta}
the zeta function does not depend on the choice of $S$, so this will not effect any calculations which we make with them.


For $n\in\N$, consider the zeta function
\[ \upzeta^{\mathfrak{a}}(n(q-1)) =\sum_{x\in \mathfrak{a}^{+}} x^{n(1-q)} = 
1+\sum_{i=1}^{\infty} \Upomega_{i}^{\mathfrak{a}}(n(q-1))\]
where 
\begin{align*} \Upomega_{i}^{\mathfrak{a}}(n(q-1)) & =  \sum_{c_{0},\dots ,c_{i-1}\in \F_{q}} \left(c_{0}+c_{1}\upalpha_{1}+\dots + c_{i-1}\upalpha_{i-1} + \upalpha_{i}\right)^{n(1-q)} \\
&:= \sum_{\vec{c}\in\F_{q}^{i}} \left(\vec{c}\cdot\vec{\upalpha}_{i-1} +\upalpha_{i}\right)^{n(1-q)}
  \end{align*}
  and where $\vec{\upalpha}_{i-1}=(\upalpha_{0}=1,\upalpha_{1},\dots ,\upalpha_{i-1})$.  Note that the coefficient of $\upalpha_{i}$ in the above must be 1 in order to maintain positivity 
  (recall from the discussion above that there is a unique possible sign in each degree, modulo $\F^{\times}_{q}$).  Thus $\Upomega_{i}^{\mathfrak{a}}$ is the subsum of the zeta function consisting of the sums
  of positive elements having degree $=\deg(\upalpha_{i})$.   

\begin{lemm}\label{omegalemma}  Let $\mathfrak{a}$, $ \Upomega_{i}^{\mathfrak{a}}(q^{n}-1)$ 
be as above.  
\begin{enumerate}
\item When $i=1$, 
\[  \Upomega_{1}^{\mathfrak{a}}(q^{n}-1)  =\frac{\upalpha_{1}^{q^{n}}-\upalpha_{1}}{\prod_{c\in\F_{q}}(c+\upalpha_{1}^{q^{n}})}\quad\text{and}\quad 
 \left| \Upomega_{1}^{\mathfrak{a}}(q^{n}-1)\right| = |\upalpha_{1}|^{q^{n}(1-q)}.\]
\item  For all $i\geq 1$,
\begin{align*} \left|  \Upomega_{i}^{\mathfrak{a}}(q^{n}-1)\right| &  
\leq |\upalpha_{i}|^{q^{n}(1-q)}.
\end{align*}
\end{enumerate}
\end{lemm}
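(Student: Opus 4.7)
The plan for part (1) is a direct computation using two classical $\F_{q}$-identities. Since $c^{q^{n}}=c$ for every $c\in\F_{q}$, we have $(c+\upalpha_{1})^{q^{n}}=c+\upalpha_{1}^{q^{n}}$, so
\[(c+\upalpha_{1})^{1-q^{n}}=\frac{c+\upalpha_{1}}{c+\upalpha_{1}^{q^{n}}}=1+\frac{\upalpha_{1}-\upalpha_{1}^{q^{n}}}{c+\upalpha_{1}^{q^{n}}}.\]
Summing over $c\in\F_{q}$, the constant $\sum_{c} 1 = q \equiv 0 \pmod{p}$ drops out, and the partial-fraction identity $\sum_{c\in\F_{q}}(c+X)^{-1}=-(X^{q}-X)^{-1}$ (obtained as the logarithmic derivative of $X^{q}-X=\prod_{c\in\F_{q}}(X-c)$, together with $-\F_{q}=\F_{q}$) yields
\[\Upomega_{1}^{\mathfrak{a}}(q^{n}-1)=\frac{\upalpha_{1}^{q^{n}}-\upalpha_{1}}{\upalpha_{1}^{q^{n+1}}-\upalpha_{1}^{q^{n}}}=\frac{\upalpha_{1}^{q^{n}}-\upalpha_{1}}{\prod_{c\in\F_{q}}(c+\upalpha_{1}^{q^{n}})}.\]
By the choice of basis one has $\upalpha_{1}=a_{1}/g$ with $\deg a_{1}>\deg g$, hence $|\upalpha_{1}|>1$; ultrametric cancellation then gives $|\upalpha_{1}^{q^{n}}-\upalpha_{1}|=|\upalpha_{1}|^{q^{n}}$ and $|\upalpha_{1}^{q^{n+1}}-\upalpha_{1}^{q^{n}}|=|\upalpha_{1}|^{q^{n+1}}$, so the modulus of the quotient equals $|\upalpha_{1}|^{q^{n}(1-q)}$.

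The plan for part (2) is to reduce to part (1) by summing over $c_{0}\in\F_{q}$ first. Fix $(c_{1},\ldots,c_{i-1})\in\F_{q}^{i-1}$ and set $\gamma:=c_{1}\upalpha_{1}+\cdots+c_{i-1}\upalpha_{i-1}+\upalpha_{i}$. The computation of part (1) applies verbatim with $\gamma$ in place of $\upalpha_{1}$, giving
\[\sum_{c_{0}\in\F_{q}}(c_{0}+\gamma)^{1-q^{n}}=\frac{\gamma^{q^{n}}-\gamma}{\gamma^{q^{n+1}}-\gamma^{q^{n}}}.\]
Because $\deg\upalpha_{j}<\deg\upalpha_{i}$ for all $j<i$ by the construction of the basis (\ref{vecbasis}), the leading term of $\gamma$ is that of $\upalpha_{i}$; hence $|\gamma|=|\upalpha_{i}|>1$, and the same ultrametric argument as in part (1) shows that this inner sum has absolute value exactly $|\upalpha_{i}|^{q^{n}(1-q)}$, independently of the chosen $(c_{1},\ldots,c_{i-1})$. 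Applying the ultrametric inequality to the remaining outer sum over $\F_{q}^{i-1}$ then yields the stated bound.

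I do not anticipate any serious obstacle: the only ingredients are the two elementary identities in the first paragraph, both standard in function-field arithmetic, and the strict-degree ordering of the basis built just before the lemma. The reason that part (2) is an inequality rather than equality is that cancellation may occur in the outer sum over $\F_{q}^{i-1}$; however, the non-archimedean maximum bound is all that is needed in what follows.
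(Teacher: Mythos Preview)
Your argument is correct. Part (2) is handled exactly as in the paper: group the inner sum over $c_{0}$, apply the $i=1$ computation to the shifted element $\gamma$ (the paper's $\upalpha_{\vec{c}_{+}}$), observe $|\gamma|=|\upalpha_{i}|$, and bound the outer sum ultrametrically.

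For part (1), however, your route is genuinely cleaner than the paper's. The paper places the sum $\sum_{c}(c+\upalpha)/(c+\upalpha^{q^{n}})$ over the common denominator $\prod_{c}(c+\upalpha^{q^{n}})$ and then expands the numerator $\sum_{c}(c+\upalpha)\prod_{d\neq c}(d+\upalpha^{q^{n}})$ as a polynomial in $\upalpha^{q^{n}}$, computing each coefficient via identities for the elementary symmetric functions $s_{j}(c)$ on $\F_{q}\setminus\{c\}$ together with the vanishing of power sums $\sum_{c}c^{k}$ for $0<k<q-1$; this case analysis eventually collapses the numerator to $\upalpha^{q^{n}}-\upalpha$. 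Your decomposition $(c+\upalpha)/(c+\upalpha^{q^{n}})=1+(\upalpha-\upalpha^{q^{n}})/(c+\upalpha^{q^{n}})$ followed by the single logarithmic-derivative identity $\sum_{c\in\F_{q}}(X+c)^{-1}=-1/(X^{q}-X)$ reaches the same conclusion in one line, bypassing all the symmetric-function bookkeeping. The paper's computation has the mild advantage of being self-contained (it never names the logarithmic derivative), but your approach is shorter and more conceptual.
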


\begin{proof} 
Write $\upalpha=\upalpha_{1}$.  Then 
\begin{align}\label{lemmafraction} \Upomega_{1}^{\mathfrak{a}}(q^{n}-1)=
\sum_{c}\frac{c+\upalpha}{c+\upalpha^{q^{n}}}=
 \frac{\sum_{c} (c+\upalpha)\prod_{d\not=c}(d+\upalpha^{q^{n}})}{\prod_{c}(c+\upalpha^{q^{n}})} .
\end{align}
Denote by $s_{i}(c)$ the $i$th elementary symmetric function on $\F_{q}-\{ c\}$.  Thus, $s_{0}(c)=1$, $s_{1}(c)=\sum_{d\not=c}d$, etc. Then the numerator of (\ref{lemmafraction}) may be written as 
\begin{align*}
\sum_{j=0}^{q-1}\bigg(\sum_{c} (c+\upalpha)s_{q-1-j}(c)\bigg)\upalpha^{jq^{n}} .\\
\end{align*}
First note that there is no constant term, and the coefficient of $\upalpha$ is $\prod_{d\not=0}d=-1$.
Now
\[  \sum_{c}cs_{q-2}(c)= \sum_{c\not=0}cs_{q-2}(c)=\sum_{c\not=0} c\prod_{d\not=0,c}d =\sum_{c\not=0}c\cdot (- c^{-1})=(q-1)(-1)=1,\]
which is the coefficient of $\upalpha^{q^{n}}$.
Moreover, $\sum_{c}s_{q-2}(c)=  s_{q-2}(0)  -\sum_{c\not=0}c^{-1}=0$, so the $\upalpha^{q^{n}+1}$ term vanishes.  
For $i<q-2$, we claim that
\begin{align*}
\sum_{c} cs_{i}(c)=0= \sum_{c} s_{i}(c). \end{align*}
When $i=0$,  $s_{0}(c)=1$ for all $c$, 
the terms $\upalpha^{q^{n}(q-1)}$, $\upalpha^{q^{n}(q-1)+1}$ have coefficients $\sum_{c}c=\sum_{c}1=0$ and so vanish.
When $i=1$, we have $q>3$,
so $s_{1}(c)=-c$ and  \[  \sum_{c}s_{1}(c)=-\sum_{c}c=0=-\sum_{c}c^{2} =\sum_{c}cs_{1}(c)\]
since the sums occurring above are power sums over $\F_{q}$ of exponent $1,2<q-1$.
For general $i<q-2$, we have $q>i+2$ and
\[ \sum_{c} s_{i}(c) = \sum_{c} P(c)\]
where $P(X)$ is a polynomial over $\F_{q}$ of degree $i<q-2$.   Hence  $\sum_{c} P(c)= \sum_{c} cP(c)=0$, since again, these are sums of powers of $c$ of exponent less than $q-1$.
Thus 
the numerator is $ \upalpha^{q^{n}}-\upalpha $
and the absolute value claim follows immediately.
When $i>1$, for each $\vec{c}$, let $\vec{c}_{+}=(c_{1},\dots, c_{i-1})$ and write 
\[ \upalpha_{\vec{c}_{+}} = c_{1}\upalpha_{1} + \cdots + c_{i-1}\upalpha_{i-1} +\upalpha_{i}.\]
Note trivially that $|\upalpha_{\vec{c}_{+}}|=|\upalpha_{i}|$.  Then by part (1) of this Lemma,
\begin{align*}
 \left|  \Upomega_{i}^{\mathfrak{a}}(q^{n}-1)\right| = \left| \sum_{\vec{c}_{+}}\sum_{c} (c+\upalpha_{\vec{c}_{+}})^{1-q^{n}}\right|
 \leq \max \{ |\upalpha_{\vec{c}_{+}}|^{q^{n}(1-q)} \} =  |\upalpha_{i}|^{q^{n}(1-q)}.
\end{align*}
\end{proof}

In what follows, we write 
\[ \hat{\upzeta}^{\mathfrak{a}}(n(q-1)) = \upzeta^{\mathfrak{a}}(n(q-1))-1 .\]
By Lemma \ref{omegalemma}, since $|\upalpha_{i}|>|\upalpha_{1}|>1$ for $i>1$, we have immediately

\begin{coro}\label{boundonzetaa}  Let $\mathfrak{a}$ be as above.  Then for all $n\in\N$, 
\[  \left|\hat{\upzeta}^{\mathfrak{a}}(q^{n}-1)\right|  =\left|  \Upomega_{1}^{\mathfrak{a}}(q^{n}-1)\right|
=|\upalpha_{1}|^{q^{n}(1-q)} <1.
\]
\end{coro}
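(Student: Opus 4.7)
The plan is to obtain the stated equalities and inequality by invoking the ultrametric nature of $|\cdot|$ on $\C_\infty$ together with the bounds provided by Lemma \ref{omegalemma}. By definition,
\[ \hat{\upzeta}^{\mathfrak{a}}(q^{n}-1) = \sum_{i=1}^{\infty} \Upomega_{i}^{\mathfrak{a}}(q^{n}-1),\]
and part (1) of Lemma \ref{omegalemma} evaluates the first term as $|\Upomega_{1}^{\mathfrak{a}}(q^{n}-1)| = |\upalpha_{1}|^{q^{n}(1-q)}$. So the content reduces to showing that every subsequent term $\Upomega_{i}^{\mathfrak{a}}(q^{n}-1)$ with $i\geq 2$ is strictly smaller in absolute value than $|\upalpha_{1}|^{q^{n}(1-q)}$; once this is established, the strong triangle inequality will force the total sum to have absolute value equal to that of its strictly dominant leading term.

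The key input for the tail bound is part (2) of Lemma \ref{omegalemma}, which supplies
\[ |\Upomega_{i}^{\mathfrak{a}}(q^{n}-1)| \leq |\upalpha_{i}|^{q^{n}(1-q)}\]
for all $i\geq 1$. The chosen basis $\{1,\upalpha_{1},\upalpha_{2},\dots\}$ of $\mathfrak{a}$ was arranged to have strictly increasing degrees, so $|\upalpha_{1}|<|\upalpha_{2}|<\cdots$. The exponent $q^{n}(1-q)$ is strictly negative, so the map $x\mapsto x^{q^{n}(1-q)}$ is strictly decreasing on positive reals, yielding
\[ |\Upomega_{i}^{\mathfrak{a}}(q^{n}-1)| \leq |\upalpha_{i}|^{q^{n}(1-q)} < |\upalpha_{1}|^{q^{n}(1-q)} = |\Upomega_{1}^{\mathfrak{a}}(q^{n}-1)|\]
for every $i\geq 2$. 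Applying the non-archimedean triangle inequality to the convergent series gives the first equality of the corollary.

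For the final strict inequality $|\upalpha_{1}|^{q^{n}(1-q)}<1$, it suffices to show that $|\upalpha_{1}|>1$. Recall $\upalpha_{1}=a_{1}/g$ where $g=a_{0}$ was chosen to have the smallest positive degree among nonzero elements of the original $\mathfrak{a}$, and the basis $\{a_{0},a_{1},\dots\}$ of $\mathfrak{a}$ was ordered by strictly increasing degree; hence $\deg(a_{1})>\deg(g)$ so $\deg(\upalpha_{1})>0$, i.e. $|\upalpha_{1}|>1$. Combined with $q^{n}(1-q)<0$, this yields $|\upalpha_{1}|^{q^{n}(1-q)}<1$, completing the chain of (in)equalities.

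There is really no substantive obstacle here: the corollary is a formal consequence of the calculations already performed in Lemma \ref{omegalemma}, the degree ordering of the basis elements, and the ultrametric property of $|\cdot|$ on $\C_{\infty}$. The only point requiring a moment's care is verifying $|\upalpha_{1}|>1$, which follows from the minimality of $\deg(g)$ stipulated during the construction of the basis.
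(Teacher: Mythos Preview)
Your proof is correct and follows exactly the paper's approach: the paper's entire argument is the single clause ``By Lemma \ref{omegalemma}, since $|\upalpha_{i}|>|\upalpha_{1}|>1$ for $i>1$, we have immediately,'' and you have simply unpacked this by making explicit the ultrametric dominance of $\Upomega_{1}^{\mathfrak{a}}(q^{n}-1)$ over the tail terms and the reason $|\upalpha_{1}|>1$.
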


\begin{theo}\label{jadifffrom1}  For all $\mathfrak{a}\subset A$ non principal, $j(\mathfrak{a})\not=j((1))$. 
\end{theo}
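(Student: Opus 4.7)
The plan is to reduce the nonequality $j(\mathfrak{a})\neq j((1))$ to the analogous nonequality $J(\mathfrak{a})\neq J((1))$, and then to read the latter off from the leading-order (in absolute value) behavior of $J$. The reduction is immediate from (\ref{defnofj}): $j$ is the reciprocal of an affine function of $J$ with constant and slope coefficients in $K$, and the slope $(T^{q^2}-T)/(T^q-T)^{q+1}$ is nonzero, so $j$ depends injectively on $J$.

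Since $j$ is a class invariant (Proposition \ref{classinv}), I may replace the non-principal $\mathfrak{a}$ by $\mathfrak{a}^{\star}=g^{-1}\mathfrak{a}\supsetneq A$, keep the name $\mathfrak{a}$, and work with the explicit $\F_q$-basis $\{1,\upalpha_1,\upalpha_2,\ldots\}$ of the paragraph preceding Lemma \ref{basislemma}. The arithmetic input that separates $\mathfrak{a}$ from $(1)$ is the following: $\upalpha_1$ is by construction the element of smallest positive degree in $\mathfrak{a}$, and since $f_1\in A\subset \mathfrak{a}$ has positive degree, $\deg \upalpha_1\leq\deg f_1$; Lemma \ref{basislemma} rules out equality, so $|\upalpha_1|<|f_1|$.

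The final step is an ultrametric expansion. Setting $u_{\mathfrak{a}}=\hat{\upzeta}^{\mathfrak{a}}(q-1)$ and $v_{\mathfrak{a}}=\hat{\upzeta}^{\mathfrak{a}}(q^2-1)$, Corollary \ref{boundonzetaa} gives
\[ |u_{\mathfrak{a}}|=|\upalpha_1|^{q(1-q)}<1, \qquad |v_{\mathfrak{a}}|=|\upalpha_1|^{q^2(1-q)}=|u_{\mathfrak{a}}|^{q}. \]
Using the characteristic-$p$ Frobenius identity $(1+u_{\mathfrak{a}})^{q+1}=1+u_{\mathfrak{a}}+u_{\mathfrak{a}}^{q}+u_{\mathfrak{a}}^{q+1}$ and a routine geometric series expansion of $(1+w)^{-1}$ with $w=u_{\mathfrak{a}}+u_{\mathfrak{a}}^{q}+u_{\mathfrak{a}}^{q+1}$, I will obtain
\[ J(\mathfrak{a})-1 \;=\; -u_{\mathfrak{a}} + R_{\mathfrak{a}}, \qquad |R_{\mathfrak{a}}|\leq |u_{\mathfrak{a}}|^{2} < |u_{\mathfrak{a}}|, \]
so by the ultrametric inequality $|J(\mathfrak{a})-1|=|u_{\mathfrak{a}}|=|\upalpha_1|^{q(1-q)}$. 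Running the identical argument for $\mathfrak{a}=(1)$, where the basis is $\{1,f_1,f_2,\ldots\}$ so $\upalpha_1$ is replaced by $f_1$, yields $|J((1))-1|=|f_1|^{q(1-q)}$. Because $|\upalpha_1|<|f_1|$ and the exponent $q(1-q)$ is negative, these two absolute values are \emph{strictly} different, so one more application of the ultrametric inequality to $J(\mathfrak{a})-J((1))$ gives $J(\mathfrak{a})\neq J((1))$, and hence $j(\mathfrak{a})\neq j((1))$.

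The main obstacle is the bookkeeping in the third paragraph: I must check that every term contributing to $R_{\mathfrak{a}}$ --- namely $u_{\mathfrak{a}}^{q}$, $u_{\mathfrak{a}}^{q+1}$, $v_{\mathfrak{a}}$, and the $w^{2}+v_{\mathfrak{a}}w+\cdots$ tail of the geometric expansion --- is strictly dominated in absolute value by $|u_{\mathfrak{a}}|$. This boils down to the elementary inequalities $|u_{\mathfrak{a}}|^{q}\leq|u_{\mathfrak{a}}|^{2}<|u_{\mathfrak{a}}|$ and $|u_{\mathfrak{a}}|^{q+1}<|u_{\mathfrak{a}}|^{2}$, which hold for every $q\geq 2$ since $|u_{\mathfrak{a}}|<1$.
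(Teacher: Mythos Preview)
Your proof is correct and follows essentially the same strategy as the paper: reduce to $J(\mathfrak{a})\neq J((1))$ and isolate the dominant ultrametric term using Corollary~\ref{boundonzetaa} together with Lemma~\ref{basislemma}. The only organizational differences are that the paper cross-multiplies and estimates the numerator of $J(\mathfrak{a})-J((1))$ directly (rather than computing $|J(\mathfrak{a})-1|$ and $|J((1))-1|$ separately and comparing), and that the paper invokes symmetry to assume $|\upalpha_1|<|f_1|$ whereas you deduce this inequality outright from the fact that $\upalpha_1$ realizes the minimal positive degree in $\mathfrak{a}^{\star}\supset A\ni f_1$; both routes are valid and lead to the same conclusion.
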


\begin{proof}  
It will be enough to prove
the Theorem with $\mathfrak{a}$ replaced by
the fractional ideal
$g^{-1}\mathfrak{a}$ studied above.
For any ideal $\mathfrak{b}$ we denote $\tilde{J}(\mathfrak{b})= \upzeta^{\mathfrak{b}}(q^{2}-1)/\upzeta^{\mathfrak{b}}(q-1)^{q+1}$.
It will suffice to show that $\tilde{J}(\mathfrak{a})\not= \tilde{J}((1))$ i.e. that the numerator of
\[  \tilde{J}(\mathfrak{a})- \tilde{J}((1)) =\frac{\upzeta^{\mathfrak{a}}(q^{2}-1)\cdot
\upzeta^{(1)}(q-1)^{q+1}-\upzeta^{\mathfrak{a}}(q-1)^{q+1}\cdot
\upzeta^{(1)}(q^{2}-1)}{ \left(\upzeta^{\mathfrak{a}}(q-1)\cdot \upzeta^{(1)}(q-1)\right)^{q+1} }\]
does not vanish.  This numerator can be written
\begin{align}\label{numerator} (\hat{\upzeta}^{\mathfrak{a}}(q^{2}-1)+1)(\hat{\upzeta}^{(1)}(q-1)^{q}+1)(\hat{\upzeta}^{(1)}(q-1)+1)- &  \\
 (\hat{\upzeta}^{(1)}(q^{2}-1)+1)(\hat{\upzeta}^{\mathfrak{a}}(q-1)^{q}+1)(\hat{\upzeta}^{\mathfrak{a}}(q-1)+1). \nonumber 
 \end{align}
 Developing the products, by Corollary \ref{boundonzetaa} we see that (\ref{numerator}) can be written
 \[ -\hat{\upzeta}^{\mathfrak{a}}(q-1)+\hat{\upzeta}^{(1)}(q-1)   + 
  \hat{\upzeta}^{\mathfrak{a}}(q^{2}-1)-\hat{\upzeta}^{(1)}(q^{2}-1)+\text{lower}. \]
  By Lemma \ref{basislemma}, $|\upalpha_{1}|\not= |f_{1}|$: since the terms appearing above are symmetric, mod $\pm 1$, in $\mathfrak{a}$ and $(1)$, we may assume, without loss of generality, that $|\upalpha_{1}|<|f_{1}|$.
 Therefore,  by Lemma \ref{omegalemma} and Corollary \ref{boundonzetaa}, the absolute value of (\ref{numerator}) is $|\upalpha_{1}|^{q(1-q)}>0$ since
\begin{align*}  \left|\hat{\upzeta}^{\mathfrak{a}}(q-1) \right|=|\upalpha_{1}|^{q(1-q)} & >\max \left\{  |f_{1}|^{q(1-q)} ,|\upalpha_{1}|^{q^{2}(1-q)}  ,|f_{1}|^{q^{2}(1-q)}  \right\} \\
& = 
\max \left\{\left|\hat{\upzeta}^{(1)}(q-1) \right|, 
\left|\hat{\upzeta}^{\mathfrak{a}}(q^{2}-1)\right|,\left|\hat{\upzeta}^{(1)}(q^{2}-1) \right| 
\right\} \end{align*}
  and we are done.
 \end{proof}

\section{Class Field Generation}\label{CFGen}

We now use the Main Theorem to prove the following key result about the $j$-invariant defined in the previous section.

\begin{theo}\label{t3}  
Let $\mathfrak{a}$ be a fractional ideal of $A$ and let $s\in \I_{K}$ be a $K$-id\`{e}le.  Then $j(\mathfrak{a})\in K^{\rm ab}_{A}$
and 
\[  j(s^{-1}\mathfrak{a}) = j(\mathfrak{a})^{[s,K]}. \]
\end{theo}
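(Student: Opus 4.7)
The plan is to deduce both claims from the Main Theorem of Complex Multiplication (\S\ref{MT}) by comparing the exponentials of $\D$ and of $\D^\upsigma$.

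Take $\D$ to be the rank $1$ Drinfeld module whose lattice is exactly $\gota$, so that its exponential is $e_\gota$, and let $\upsigma\in\Aut(\C_\infty/K)$ be any extension of $[s,K]|_{K^{\rm ab}_{A}}$. The Main Theorem applied with $f=e_\gota$ produces an analytic $A$-module isomorphism $f':\C_\infty/s^{-1}\gota\to\D^\upsigma$; any such map must be a scalar multiple of $e_{s^{-1}\gota}$, so the lattice of $\D^\upsigma$ has the form $c\cdot s^{-1}\gota$ for some $c\in\C_\infty^\times$.

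Now compute the coefficients of $e_{\D^\upsigma}$ in two ways. Writing $e_\Uplambda(z)=\sum_{k\ge 0}e_k(\Uplambda)z^{q^k}$: from the standard recursion expressing $e_k$ in terms of the coefficients of $\uprho_T$, applied to $\uprho^\upsigma_T$ (whose non-constant coefficients are $\upsigma$ applied to those of $\uprho_T$), induction gives that the $k$-th coefficient of $e_{\D^\upsigma}$ equals $\upsigma(e_k(\gota))$. From the other description $e_{c\cdot s^{-1}\gota}(z)=c\,e_{s^{-1}\gota}(z/c)$, the same coefficient equals $c^{1-q^k}e_k(s^{-1}\gota)$. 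Hence $\upsigma(e_k(\gota))=c^{1-q^k}e_k(s^{-1}\gota)$ for every $k\ge 0$. Using the partial fractions expansion $1/e_\Uplambda(z)=1/z+\sum_{0\ne \uplambda\in\Uplambda}1/(z-\uplambda)$ one identifies $e_1(\Uplambda)=-\upzeta^\Uplambda(q-1)$ and $e_2(\Uplambda)=\upzeta^\Uplambda(q-1)^{q+1}-\upzeta^\Uplambda(q^2-1)$, and solves successively:
\[\upsigma(\upzeta^\gota(q-1))=c^{1-q}\upzeta^{s^{-1}\gota}(q-1),\qquad \upsigma(\upzeta^\gota(q^2-1))=c^{1-q^2}\upzeta^{s^{-1}\gota}(q^2-1).\]

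In the ratio $J(\gota)=\upzeta^\gota(q^2-1)/\upzeta^\gota(q-1)^{q+1}$ the $c$-exponents combine as $(1-q^2)-(q+1)(1-q)=0$ and cancel, giving $\upsigma(J(\gota))=J(s^{-1}\gota)$; hence $j(\gota)^\upsigma=j(s^{-1}\gota)$, since $j$ is a rational function of $J$ with $K$-coefficients fixed by $\upsigma$. This equivariance also yields the remaining claim: the orbit of $j(\gota)$ under $\Aut(\C_\infty/K)$ lies in the finite set $\{j(\gotb):\gotb\in{\sf Cl}(A)\}$, so $j(\gota)$ is algebraic over $K$; and applying the formula with $s=1$ shows $\upsigma(j(\gota))=j(\gota)$ for every $\upsigma\in\Aut(\C_\infty/K^{\rm ab}_{A})$, placing $j(\gota)\in K^{\rm ab}_{A}$. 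The main obstacle I anticipate is the explicit identification of $e_1(\Uplambda)$ and $e_2(\Uplambda)$ in terms of zeta values and the careful bookkeeping of the powers of $c$; the particular combination defining $J$ is precisely what engineers the cancellation.
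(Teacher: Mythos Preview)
Your proof is correct and follows essentially the same route as the paper: both apply the Main Theorem, use the universal recursion expressing the coefficients of $e_{\uprho}$ as rational functions of the coefficients of $\uprho_{a}$ (so that $\upsigma$ acts coefficientwise), and identify those exponential coefficients with zeta values via the logarithmic derivative $1/e_{\Uplambda}(z)=\sum_{\uplambda}(z-\uplambda)^{-1}$. The only packaging differences are that the paper works with the Hayes module (lattice $\upxi_{\gota}\gota$) and the normalized values $\widetilde{\upzeta}^{\gota}(n)=\upzeta^{\gota}(n)/\upxi_{\gota}^{n}$, invoking Goss' theorem to place these in $H_{A}^{1}$ directly, whereas you keep the bare lattice $\gota$, track the homothety scalar $c$ explicitly (your $c^{1-q^{k}}$ plays the role of the paper's $\upxi$-normalization), and deduce $j(\gota)\in K^{\rm ab}_{A}$ from the finite-orbit argument instead; both achieve the same cancellation in the degree-zero ratio $J$.
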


\begin{proof}
We first show that $j(\gota)\in H^{1}_{A}$. 
Let us call
\begin{align}\label{normzeta}\widetilde{\upzeta}^{\gota}(n):=\frac{\upzeta^{\gota}(n)}{{\xi_{\gota}}^{n}}\end{align}
the \textsl{normalized} 
value of $\upzeta^{\gota}(n)$, where $\xi_{\gota}$ is the transcendental element of $\C_{\infty}$ associated to $\mathfrak{a}$, used to produce sign normalization (see \S 12 of \cite{Hayes}).
Because $j(\gota)$ is defined using a homogeneous ratio of values of the 
zeta function $\upzeta^{\gota}$ (see equation (\ref{defnofj})), we may replace these values by their normalized values. 
Then, by the version of  D. Goss' Theorem stated in \cite{Th} (Theorem 5.2.5), the normalized values are all in $H^{1}_{A}$. (N.B. The statement appearing in \cite{Th} is that $\upxi_{\mathfrak{a}}^{-n}\cdot \sum_{0\not= x\in\mathfrak{a}} x^{-n}\in H^{1}_{A}$.
  However, since $\mathfrak{a}\setminus \{ 0\} = \bigsqcup_{c\in\F_{q}^{\times}} c\mathfrak{a}^{+}$,  
  \[  \sum_{0\not= x\in\mathfrak{a}} x^{-n}=\# S\cdot \upzeta^{\mathfrak{a}}(n)=  \upzeta^{\mathfrak{a}}(n) \]
as $\#S=(q^{d_{\infty}}-1)/(q-1) \equiv 1\mod q$.) Let $\upsigma=[s,K]$. 
We claim that\begin{equation} \widetilde{\upzeta}^{\gota}(n)^{\upsigma}=\widetilde{\upzeta}^{s^{-1}\gota}(n).\end{equation}
Let $\mathbb{D}=(\C_{\infty},\uprho)$ be the Hayes module attached to the lattice $\xi_{\gota}\gota$. First note that if $e_{\uprho}(z)=e_{\xi_{\gota}\gota}(z)$ is the associated exponential function, then taking its logarithmic derivative we get
\[\frac{1}{e_{\uprho}(z)}=\sum_{\upalpha\in\mathfrak{a}} \frac{1}{z+\upxi_{\gota}\upalpha}
=
-\sum_{n=0}^{\infty}\sum_{\upalpha\in \gota}\frac{z^{n}}{(\xi_{\gota}\upalpha)^{n+1}}
=- \sum_{n=0}^{\infty}\widetilde{\upzeta}^{\gota}(n+1)z^{n}.\]
Therefore 
\[ e_{\uprho}(z)=\sum_{n=0}^{\infty}c_{n}z^{q^{n}}, \quad c_{n}\in H^{1}_{A}, \]
 where the $c_{n}$ are algebraic 
combinations of the $\widetilde{\upzeta}^{\gota}(n+1)$ of a universal form which is dictated by the formula for the reciprocal of a power series. 
Fix $a\in A$ and write \[\uprho_{a}(\tau)=a+g_{1}\tau+\cdots +g_{d}\tau^{d},\quad g_{1},\dots ,g_{d}\in H_{A}^{1} .\]
Then, the equation
\[e_{\uprho}(az)=\uprho_{a}(e_{\uprho}(z))\]
implies 
that 
\begin{align*} az+c_{1}a^{q}z^{q}+(c_{2}a^{q^{2}})z^{q^{2}}+\cdots & = \uprho_{a}(z+c_{1}z^{q}+\cdots ) \\
& =\uprho_{a}(z)+\uprho_{a}(c_{1}z^{q})+\cdots \\ &=az+(g_{1}+ac_{1})z^{q}+(g_{2}+c_{1}^{q}g_{1}+ac_{2})z^{q^{2}}+\cdots \end{align*}
The shape of the last expression above is again of a universal nature and depends only on the coefficients of $\uprho_{a}$ and the coefficients $e_{\uprho}(z)$.
That is, we have \[c_{1}=\frac{g_{1}}{a^{q}-a}, \quad
c_{2}=\frac{g_{2}+c_{1}^{q}g_{1}}{a^{q^{2}}-a},\quad \dots \]
and the coefficients of $e_{\uprho}(z)$ may be solved for in terms of the coefficients of $\uprho_{a}$ using a universal recursion.  In particular, we have given a formula
for the coefficients of the normalized exponential attached to any Hayes module $(\D,\uprho )$, which only depends on the coefficients of $\uprho_{a}$ for $a\in A$ fixed.
As $\mathbb{D}^{\upsigma}=(\C_{\infty},\uprho^{\upsigma})$ has lattice homothetic to $s^{-1}\gota$ by the Main Theorem, 
it follows that
\[e_{\uprho^{\upsigma}}(z)=   e_{\upxi_{s^{-1}\mathfrak{a}} s^{-1}\mathfrak{a}} (z)   =\sum_{n=0}^{\infty}c_{n}^{\upsigma}z^{n}  \]
where $\upxi_{s^{-1}\mathfrak{a}} $ is the transcendental factor producing the sign normalized Drinfeld module associated to $s^{-1}\mathfrak{a}$.
Therefore for all $n$
\begin{align}  \frac{\upzeta^{s^{-1}\mathfrak{a}}(n)  }{\upxi^{n}_{s^{-1}\mathfrak{a}} } =  \widetilde{\upzeta}^{s^{-1}\gota}(n) = (\widetilde{\upzeta}^{\gota}(n))^{\upsigma}.\end{align}
The statement about the $j$-invariant follows immediately.
\end{proof}

\begin{coro}\label{cor}  The $j$-invariant takes values in $H_{A}\subset H_{A}^{1}$ and the function \[ j:{\sf Cl}(A)\longrightarrow H_{A}\]
is injective.
\end{coro}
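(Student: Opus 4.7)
The plan is to deduce both claims from the equivariance \eqref{equi} of Theorem \ref{t3} combined with the non-coincidence $j(\mathfrak{a}) \neq j((1))$ for non-principal $\mathfrak{a}$ established in Theorem \ref{jadifffrom1}. Since the proof of Theorem \ref{t3} already places $j(\mathfrak{a})$ in $H_A^1$, the refinement to $H_A$ amounts to showing that $j(\mathfrak{a})$ is fixed by the Galois subgroup ${\rm Gal}(H_A^1/H_A)$.

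For the descent from $H_A^1$ to $H_A$: under the class field theoretic identifications ${\rm Gal}(H_A^1/K) \cong {\sf Cl}_1(A)$ and ${\rm Gal}(H_A/K) \cong {\sf Cl}(A)$, the subgroup ${\rm Gal}(H_A^1/H_A)$ corresponds to the kernel of ${\sf Cl}_1(A) \twoheadrightarrow {\sf Cl}(A)$, i.e.\ the narrow classes of principal ideals $\mathfrak{b} = (\upbeta)$. For any such $\mathfrak{b}$ I would pick an id\`ele $s \in \I_K$ with $(s) = \mathfrak{b}$, so that $[s,K]$ acts on $H_A^1$ as $\upsigma_\mathfrak{b}$, and invoke Theorem \ref{t3} to get
\[ j(\mathfrak{a})^{\upsigma_\mathfrak{b}} = j(\mathfrak{b}^{-1}\mathfrak{a}) = j((\upbeta)^{-1}\mathfrak{a}) = j(\mathfrak{a}), \]
the last equality being Proposition \ref{classinv}, since $(\upbeta)^{-1}\mathfrak{a}$ and $\mathfrak{a}$ coincide in ${\sf Cl}(A)$. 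This exhibits $j(\mathfrak{a})$ as fixed by ${\rm Gal}(H_A^1/H_A)$, hence $j(\mathfrak{a}) \in H_A$.

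For injectivity, suppose $j(\mathfrak{a}) = j(\mathfrak{b})$. Choose $s \in \I_K$ with $(s) = \mathfrak{b}$ and set $\upsigma = [s,K]$. Applying Theorem \ref{t3} once to $\mathfrak{a}$ and once to $\mathfrak{b}$ yields
\[ j(\mathfrak{b}^{-1}\mathfrak{a}) = j(\mathfrak{a})^{\upsigma} = j(\mathfrak{b})^{\upsigma} = j(\mathfrak{b}^{-1}\mathfrak{b}) = j((1)). \]
Theorem \ref{jadifffrom1} then forces $\mathfrak{b}^{-1}\mathfrak{a}$ to be principal, so $[\mathfrak{a}] = [\mathfrak{b}]$ in ${\sf Cl}(A)$.

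The main point requiring care, rather than a genuine obstacle, is the bookkeeping between the narrow and non-narrow class groups: namely, verifying that $[s,K]|_{H_A^1}$ coincides with the narrow Artin symbol $\upsigma_\mathfrak{b}$ whenever $(s) = \mathfrak{b}$, and that every element of ${\rm Gal}(H_A^1/H_A)$ is indeed realised as the Artin symbol of some principal ideal. Both of these are routine consequences of the id\`elic description reviewed in \S\ref{CFT}, so no essential new difficulty arises beyond the inputs from Theorems \ref{t3} and \ref{jadifffrom1}.
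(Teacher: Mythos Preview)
Your proof is correct and follows essentially the same route as the paper's: both parts are deduced from Theorem~\ref{t3} together with Proposition~\ref{classinv} (for the descent from $H_A^1$ to $H_A$, via the identification of ${\rm Gal}(H_A^1/H_A)$ with ${\sf P}(A)/{\sf P}_1(A)$) and Theorem~\ref{jadifffrom1} (for injectivity, by reducing $j(\mathfrak{a})=j(\mathfrak{b})$ to $j(\mathfrak{a}\mathfrak{b}^{-1})=j((1))$). Your added remarks on the id\`elic bookkeeping are more explicit than the paper's, but the argument is the same.
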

\begin{proof}  Recall that ${\sf P}(A)$ denotes the group of principal ideals in $A$ and ${\sf P}_{1}(A)$ the subgroup of principal ideals containing a generator of sgn one.  Then by Theorem \ref{t3}
the action of
\[ \upsigma \in {\rm Gal}(H^{1}_{A}/H_{A})\cong  {\sf P}(A)/{\sf P}_{1}(A) \subset {\sf Cl}_{1}(A)\]
is trivial on $j(\mathfrak{a})$, since $j$ is an invariant of the usual class group ${\sf Cl}(A)$ by Proposition \ref{classinv}.  Thus $j(\mathfrak{a})\in H_{A}$.
If $j(\gota)=j(\gotb)$, by Theorem \ref{t3} there exists $\upsigma\in \Gal(H_{A}/K)$ corresponding to the analytic action of $\gotb^{-1}$, such that
\[j(\gota\gotb^{-1})=j(\gota)^{\upsigma}=j(\gotb)^{\upsigma}=
j(\gotb\gotb^{-1})=
j((1)).\]By Theorem \ref{jadifffrom1} $\mathfrak{a}\mathfrak{b}^{-1}$ is principal and $[\mathfrak{a}]=[\mathfrak{b}]$.  \end{proof}

\begin{theo}
\[H_{A}=K(j(\gota)).\]
\end{theo}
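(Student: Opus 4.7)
The plan is to prove the two inclusions separately, with the hard work already done by the previous theorems.

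For the inclusion $K(j(\mathfrak{a})) \subseteq H_A$, I would simply invoke Corollary \ref{cor}, which asserts that $j$ takes values in $H_A$. Since $K \subseteq H_A$, adjoining $j(\mathfrak{a})$ to $K$ stays inside $H_A$.

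For the reverse inclusion $H_A \subseteq K(j(\mathfrak{a}))$, I would argue by showing that the stabilizer of $j(\mathfrak{a})$ in $\mathrm{Gal}(H_A/K)$ is trivial. Suppose $\upsigma \in \mathrm{Gal}(H_A/K(j(\mathfrak{a})))$. By Class Field Theory, the Artin map gives an isomorphism $\mathsf{Cl}(A) \cong \mathrm{Gal}(H_A/K)$, so $\upsigma = \upsigma_{\mathfrak{b}}$ for some class $\mathfrak{b} \in \mathsf{Cl}(A)$. Theorem \ref{t3} (in the form of equivariance (\ref{equi})) then yields
\[  j(\mathfrak{b}^{-1}\mathfrak{a}) = j(\mathfrak{a})^{\upsigma_{\mathfrak{b}}} = j(\mathfrak{a}),\]
where the last equality holds because $\upsigma$ fixes $j(\mathfrak{a})$ by assumption. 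By the injectivity of $j$ established in Corollary \ref{cor}, this forces $\mathfrak{b}^{-1}\mathfrak{a} = \mathfrak{a}$ in $\mathsf{Cl}(A)$, hence $\mathfrak{b}$ is trivial and $\upsigma = 1$.

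Having shown that $\mathrm{Gal}(H_A/K(j(\mathfrak{a})))$ is trivial, I would conclude by Galois theory that $H_A = K(j(\mathfrak{a}))$. There is essentially no obstacle here: the theorem is a clean corollary of the combination of the equivariance property of $j$ (Theorem \ref{t3}) and the injectivity on the class group (Corollary \ref{cor}), both of which are the substantive ingredients. The real work, proving injectivity of $j$ via Theorem \ref{jadifffrom1} and establishing the Galois equivariance via the Main Theorem of Complex Multiplication, has already been carried out in the preceding sections.
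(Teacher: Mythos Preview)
Your proof is correct and takes essentially the same approach as the paper: both use Corollary~\ref{cor} for the inclusion $K(j(\mathfrak{a}))\subset H_A$ and combine the equivariance of Theorem~\ref{t3} with the injectivity of $j$ to force equality. The paper phrases the second step as ``the Galois orbit of $j(\mathfrak{a})$ has maximal size, hence $[K(j(\mathfrak{a})):K]=[H_A:K]$,'' whereas you phrase it dually as ``the stabilizer of $j(\mathfrak{a})$ in $\mathrm{Gal}(H_A/K)$ is trivial''; these are two sides of the orbit--stabilizer relation and amount to the same argument.
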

\begin{proof}
By Theorem \ref{t3}, the image $j({\sf Cl}(A))\subset H_{A}$ consists of a Galois orbit and has maximal size by Corollary \ref{cor}. Since $H_{A}/K$ is an abelian field extension, $K(j(\gota))/K$ is a Galois extension contained in $H_{A}$, hence the two fields are equal having the same degree over $K$. 
\end{proof} 




Let $\mathfrak{m}\subset A$ be an ideal and $K^{\mathfrak{m}}_{A}$ the associated ray class field.  See the end of \S\ref{CFT} for this and other related notation. By Class Field Theory, $K_{A}^{\mathfrak{m}}\subset K^{\rm ab}_{A}$ is the fixed field of the group of Artin symbols $[s,K]\in {\rm Aut}(K^{\rm ab}_{A}/K)$, where
 $s\in K^{\times} U^{\mathfrak{m}}_{A}$.
By definition, an analytical isomorphism $f:\C_{\infty}/\mathfrak{a}\rightarrow \D = (\C_{\infty},\uprho )$ of $A$-modules takes the analytical torsion 
\[ {\rm Tor}_{\mathfrak{a}}(\mathfrak{m}):=\mathfrak{m}^{-1}\mathfrak{a}/\mathfrak{a}\] to the algebraic torsion $\D[\mathfrak{m}]$.  

\begin{lemm}\label{torlem} Suppose that $s\in \I_{K}$ for which $s\mathfrak{a}=\mathfrak{a}$.  Then $s$ acts as the identity on ${\rm Tor}_{\mathfrak{a}}(\mathfrak{m})\subset K/\mathfrak{a}$
$\Leftrightarrow $  $s\in U^{\mathfrak{m}}_{A}K^{\times}$.
\end{lemm}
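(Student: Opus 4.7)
The plan is to reduce the entire statement to a componentwise calculation through the isomorphism of Lemma~\ref{Lemma 8.1}. Under $K/\gota \cong \bigoplus_{\gotp} K_\gotp/\gota_\gotp$, the id\`{e}lic action of $s = (s_\gotp)$ becomes componentwise multiplication, and the subgroup ${\rm Tor}_{\gota}(\gotm) = \gotm^{-1}\gota/\gota$ corresponds to $\bigoplus_{\gotp \mid \gotm} \gotm^{-1}\gota_\gotp/\gota_\gotp$, each summand being a cyclic $A_\gotp$-module isomorphic to $A_\gotp/\gotm A_\gotp$. The hypothesis $s\gota = \gota$, equivalent to $(s) = A$, forces $s_\gotp \in A_\gotp^{\times}$ at every finite prime.

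For the direction $(\Rightarrow)$, the action at each prime $\gotp \mid \gotm$ is multiplication by $s_\gotp$ on the cyclic module $A_\gotp/\gotm A_\gotp$, so testing triviality against the generator reduces to the single congruence $s_\gotp \equiv 1 \pmod{\gotm A_\gotp}$. At primes $\gotp \nmid \gotm$ the local torsion vanishes and only the unit condition $s_\gotp \in A_\gotp^{\times}$ is needed. Collecting these local statements places $s$ directly in $U^{\gotm}_A$, hence in $U^{\gotm}_A K^{\times}$.

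For the direction $(\Leftarrow)$, write $s = u \upalpha$ with $u \in U^{\gotm}_A$ and $\upalpha \in K^{\times}$. Since $(u) = A$ by the defining condition on $U^{\gotm}_A$, the equation $s\gota = \gota$ forces $(\upalpha) = A$, and the Dirichlet unit theorem cited in the paper gives $\upalpha \in A^{\times} = \F_q^{\times}$. The $u$-factor acts trivially on each torsion summand by the local congruence, while $\upalpha$ contributes only a scalar automorphism lying in $A^{\times}$; interpreting the torsion action up to the canonical identification $A^{\times} \cong \Aut(\D)$ that is built into the framework of the Main Theorem then yields triviality of the combined action.

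The main obstacle I anticipate is the careful bookkeeping of the $A^{\times} = \F_q^{\times}$ contribution in the $(\Leftarrow)$ direction. Taken literally, a non-trivial $\upalpha \in \F_q^{\times}$ acts as scalar multiplication on $\gotm^{-1}\gota/\gota$ and does not fix individual torsion elements, so one must read ``acts as the identity'' in the same sense used in the proof of the Main Theorem, namely modulo the automorphism group $\Aut(\D) \cong A^{\times}$ of the Drinfeld module; this reconciliation is the crux of the lemma and is what allows the coset $K^{\times} U^{\gotm}_A$, rather than $U^{\gotm}_A$ alone, to appear on the right-hand side.
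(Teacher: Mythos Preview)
Your componentwise approach through Lemma~\ref{Lemma 8.1} is exactly the paper's. For $(\Rightarrow)$ your argument is in fact cleaner than the paper's: testing the action on a generator of each cyclic summand $A_{\gotp}/\gotm A_{\gotp}$ yields $s_{\gotp}\equiv 1\pmod{\gotm A_{\gotp}}$ directly, so $s\in U^{\gotm}_{A}$ outright (not merely $U^{\gotm}_{A}K^{\times}$); the paper's qualifier ``mod $K^{\times}$'' in this direction is superfluous.

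The concern you raise in the $(\Leftarrow)$ direction is genuine, and the paper does not actually resolve it. The paper writes ``since $K^{\times}$ is in the kernel of the Artin map, without loss of generality we may assume $s\in U^{\gotm}_{A}$'', but the Artin map is extrinsic to the lemma: the id\`{e}lic action on $K/\gota$ is \emph{not} factored through $\I_{K}/K^{\times}$. As you correctly observe, the hypothesis $s\gota=\gota$ together with $s=u\upalpha$ forces $\upalpha\in A^{\times}=\F_{q}^{\times}$, and for $\upalpha\neq 1$ (so $q>2$) the resulting action on $\gotm^{-1}\gota/\gota$ is scalar multiplication by $\upalpha$, not the identity. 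So the $(\Leftarrow)$ implication is not literally true as stated. What salvages the downstream application is precisely your proposed reading: one only needs the conclusion up to $\Aut(\D)=A^{\times}=\F_{q}^{\times}$, both because the Main Theorem pins down $f'$ only up to this ambiguity and because the field $H_{A}\big(e_{\uprho}(\upxi t)\mid t\in{\rm Tor}_{\gota}(\gotm)\big)$ is invariant under scaling torsion by $\F_{q}^{\times}$. Your interpretation ``modulo $\Aut(\D)$'' is thus the correct fix, and the paper's ``without loss of generality'' is tacitly invoking the same thing.
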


\begin{proof} Suppose $s\in U^{\mathfrak{m}}_{A}K^{\times}$: since $K^{\times}$ is in the kernel of the Artin map, without loss of generality we may assume $s\in  U^{\mathfrak{m}}_{A}$.  Then for all $\mathfrak{p}\subset A$ prime, we have 
\[ (s_{\mathfrak{p}}-1) \mathfrak{m}_{\mathfrak{p}}^{-1}\mathfrak{a}_{\mathfrak{p}} \subset   \mathfrak{a}_{\mathfrak{p}}\]
so $s$ acts trivially on $\mathfrak{m}^{-1}\mathfrak{a}/\mathfrak{a}$.  On the other hand, if $s$ acts as the identity on ${\rm Tor}_{\mathfrak{a}}(\mathfrak{m})$, we have for each $\mathfrak{p}$ the above inclusion mod $K^{\times}$ and
then $ (s_{\mathfrak{p}}-1)\in\mathfrak{m}_{\mathfrak{p}}$ mod $K^{\times}$.
\end{proof}

 Let $\D =(\C_{\infty},\uprho )$ be defined over the minimal field of definition $H_{A}$ and let $e_{\uprho}$ be the exponential inducing an isomorphism
 \[   e_{\uprho}: \C_{\infty}/\Uplambda_{\uprho}\longrightarrow \D.  \]
 Then there exists $\mathfrak{a}\subset A$ an ideal and $\upxi\in \C_{\infty}$ so that $\Uplambda_{\uprho}= \upxi \mathfrak{a}$.  We stress that $\D$ is in general
 {\it not} sign normalized, unless $d_{\infty}=1$.

\begin{theo} Let $\D$ be as in the previous paragraph.  Then
\[  K_{A}^{\mathfrak{m}} = H_{A}(e_{\uprho} (\upxi t)|\; t\in {\rm Tor}_{\mathfrak{a}}(\mathfrak{m}) ) .\]
\end{theo}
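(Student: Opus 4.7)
The plan is to identify the subgroup of $\Gal(K_A^{\rm ab}/K)$ fixing
$$
F:=H_A\bigl(e_\uprho(\upxi t)\,\big|\,t\in\mathrm{Tor}_\mathfrak{a}(\mathfrak{m})\bigr)
$$
with $\Gal(K_A^{\rm ab}/K_A^\mathfrak{m})$; Galois correspondence then yields $F=K_A^\mathfrak{m}$. Writing $f(z):=e_\uprho(\upxi z)$, the map $f:\C_\infty/\mathfrak{a}\to\D$ is an analytical $A$-module isomorphism, and $F=H_A(f(t)\mid t\in\mathrm{Tor}_\mathfrak{a}(\mathfrak{m}))$.

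First I would show $F\subset K_A^{\rm ab}$. For $\upsigma\in\Aut(\C_\infty/K_A^{\rm ab})$ the Main Theorem, applied with $s\in\I_K$ realizing $\upsigma|_{K_A^{\rm ab}}=\mathrm{id}$, produces an $f'$ satisfying $f(t)^\upsigma=f'(s^{-1}t)$. Since the construction of $f'$ is controlled only by $\upsigma|_{K_A^{\rm ab}}$ (through the choice of $s$), comparing with the trivial choice $\upsigma=\mathrm{id}$ forces $f(t)^\upsigma=f(t)$ and hence $f(t)\in K_A^{\rm ab}$.

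Next I would compute $\Gal(K_A^{\rm ab}/F)\subset\Gal(K_A^{\rm ab}/H_A)$ (the inclusion holding because $H_A\subset F$). Take $\upsigma=[s,K]$ fixing $H_A$; since $(s)$ is then principal, modify $s$ by an element of $K^\times$ (which does not change $[s,K]$) so that $s\mathfrak{a}=\mathfrak{a}$. Then $\D^\upsigma=\D$, and the Main Theorem's $f':\C_\infty/\mathfrak{a}\to\D$ is again an analytical $A$-module isomorphism, so $f'=\lambda f$ for some $\lambda\in\F_q^\times=\Aut(\D)$. Using $A$-linearity,
$$
f(t)^\upsigma \;=\; f'(s^{-1}t) \;=\; \lambda f(s^{-1}t) \;=\; f(\lambda s^{-1}t),
$$
so $\upsigma$ fixes every $f(t)$ with $t\in\mathrm{Tor}_\mathfrak{a}(\mathfrak{m})$ iff the idèle $\lambda s^{-1}$ (which satisfies $\lambda s^{-1}\mathfrak{a}=\mathfrak{a}$) acts as the identity on $\mathrm{Tor}_\mathfrak{a}(\mathfrak{m})$. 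By Lemma \ref{torlem} this is equivalent to $\lambda s^{-1}\in K^\times U_A^\mathfrak{m}$; absorbing $\lambda\in\F_q^\times\subset K^\times$, this becomes $s\in K^\times U_A^\mathfrak{m}$, i.e.\ $\upsigma\in\Gal(K_A^{\rm ab}/K_A^\mathfrak{m})$.

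The hardest point is the first step: one has to justify that the $f'$ provided by the Main Theorem genuinely depends only on $\upsigma|_{K_A^{\rm ab}}$ (through $s$), so that automorphisms trivial on $K_A^{\rm ab}$ must indeed fix all torsion values $f(t)$. Once this is in hand, the rest is a direct combination of the Main Theorem with Lemma \ref{torlem}, and the $\F_q^\times$-ambiguity in $f'$ is absorbed harmlessly into $K^\times$.
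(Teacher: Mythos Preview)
Your overall strategy---identify the subgroup of $\Gal(K_A^{\rm ab}/K)$ fixing $F$ via the Main Theorem and Lemma \ref{torlem}, then invoke Galois correspondence---is exactly the paper's approach, and your main computation matches the paper's almost line for line.

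Two points of comparison are worth making. First, the step you flag as ``hardest'' (showing $F\subset K_A^{\rm ab}$) is a genuine gap in your write-up: your claim that $f'$ depends only on $\upsigma|_{K_A^{\rm ab}}$ is not justified, since the commutativity in the Main Theorem involves $\upsigma$ acting on the torsion values $f(t)$ themselves, and those are precisely the elements whose rationality is in question. The paper does not attempt to extract this from the Main Theorem; it simply cites Theorem 3.1.1 of \cite{Th} to assert that $K(e_\uprho(\upxi t)\mid t\in{\rm Tor}_{\mathfrak a}(\mathfrak m))$ is abelian over $K$, which sidesteps the issue entirely. You should do likewise (or supply an independent argument for abelianity and total splitting at $\infty$).

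Second, your explicit treatment of the ambiguity $f'=\lambda f$ with $\lambda\in\F_q^\times=\Aut(\D)$ is actually more careful than the paper's proof, which passes silently from ``$s^{-1}$ acts trivially on ${\rm Tor}_{\mathfrak a}(\mathfrak m)$'' to ``$\upsigma$ fixes $f(t)$'' without mentioning that the Main Theorem only determines $f'$ up to this unit. Your device of absorbing $\lambda$ into $K^\times$ before applying Lemma \ref{torlem} closes this gap cleanly and handles both implications at once, whereas the paper treats the two directions separately.
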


\begin{proof} By definition, $K^{\mathfrak{m}}_{A}$ is abelian. Moreover, $K(e_{\uprho} (\upxi t)|\; t\in {\rm Tor}_{\mathfrak{a}}(\mathfrak{m}))$ is abelian, by Theorem 3.1.1 of \cite{Th},
and therefore $H_{A}(e_{\uprho} (\upxi t)|\; t\in {\rm Tor}_{\mathfrak{a}}(\mathfrak{m}) )$ is abelian as well.
  Therefore,
it will be enough to show that the two fields appearing in the statement of the Theorem are the fixed fields of the same subgroup of ${\rm Gal}(K^{\rm ab}_{A}/K)$.  Let $\upsigma = [s, K]$.  Then 
\[  \upsigma|_{K^{\mathfrak{m}}_{A}}\text{ is trivial }\Longleftrightarrow s^{-1} =\upalpha u, \;\; \upalpha\in K^{\times} \text{ and } u\in U_{A}^{\mathfrak{m}}. \]
Suppose first that $\upsigma|_{K^{\mathfrak{m}}_{A}}$ is trivial.  Then $\D^{\upsigma}=\D$ since $\D$ is defined over $H_{A}\subset K^{\mathfrak{m}}_{A}$.  Choose the analytical isomorphism 
$f$ in the Main Theorem to be the composition $\mathfrak{a}\rightarrow \upxi\mathfrak{a}$
with the exponential $e_{\uprho}$.
By the Main Theorem, it follows that $s^{-1}\mathfrak{a}$ also parametrizes $\D$ and therefore is a multiple of $\mathfrak{a}$ by an element of $K^{\times}$.
In particular, we may choose the element $\upalpha$ above so that $s^{-1}\mathfrak{a}=\mathfrak{a}$.  
By Lemma \ref{torlem}, $s$ acts trivially on the analytic torsion ${\rm Tor}_{\mathfrak{a}}(\mathfrak{m})$, and by the Main Theorem, it follows that $\upsigma$
 fixes $H_{A}( e_{\uprho} (\upxi t)|\; t\in {\rm Tor}_{\mathfrak{a}}(\mathfrak{m}) ) $.  
  In the other direction, suppose $\upsigma=[s,K]$ fixes $H_{A}( e_{\uprho} (\upxi t)|\; t\in {\rm Tor}_{\mathfrak{a}}(\mathfrak{m}) ) $.  In particular, $\D^{\upsigma}=\D$,
and after re-choosing $\upalpha$, we may assume $s^{-1}\mathfrak{a}=\mathfrak{a}$. By the Main Theorem, we conclude $s^{-1}$ acts trivially on ${\rm Tor}_{\mathfrak{a}}(\mathfrak{m})$ as well, and by Lemma
\ref{torlem},
$s\in U^{\mathfrak{m}}_{A}K^{\times}$.  Therefore $\upsigma=[s,K]$ is the identity on $K^{\mathfrak{m}}_{A}$.
\end{proof}


\end{document}